\documentclass{amsart}

\usepackage{amsmath,amssymb}
\usepackage{amsthm}
\usepackage[singlelinecheck=off]{caption}
\captionsetup{labelsep=space,justification=justified,singlelinecheck=off}
\usepackage{microtype}
\usepackage{graphicx}
\usepackage{algorithm,algpseudocode}
\usepackage{mathrsfs}
\usepackage{epstopdf}
\usepackage{float}
\usepackage{url}
\usepackage{fancyhdr}
\usepackage{subfig}
\usepackage{epstopdf}

\makeatletter
\def\BState{\State\hskip-\ALG@thistlm}
\makeatother

\usepackage{epstopdf}
\usepackage{nicefrac}
\DeclareGraphicsRule{.tif}{png}{.png}{`convert #1 `dirname #1`/`basename #1 .tif`.png}

\setlength\textwidth{6in}
\setlength\oddsidemargin{0.25in}
\setlength\evensidemargin{0.25in}

\newcommand{\Tr}{\mathrm{Tr}}

\newtheorem{definition}{Definition}
\newtheorem{theorem}{Theorem}
\newtheorem{corollary}{Corollary}
\newtheorem{proposition}{Proposition}

\DeclareMathOperator{\diag}{diag}
\DeclareMathOperator{\conv}{conv}

\title{Convex relaxation approaches for strictly correlated density functional theory}
\author{Yuehaw Khoo
	\and Lexing Ying }

\address{
  Department of Mathematics,
  Stanford University,
  Stanford, CA 94305
}

\begin{document}

\keywords{convex relaxation, strictly correlated density functional theory,
	semidefinite programming.}

\subjclass[2010]{49M20, 90C22, 90C25}

\maketitle

\begin{abstract}
	In this paper, we introduce methods from convex optimization to solve the multimarginal transport
	type problems arise in the context of density functional theory. Convex relaxations are used to
	provide outer approximation to the set of $N$-representable 2-marginals and 3-marginals, which in
	turn provide lower bounds to the energy. We further propose rounding schemes to obtain upper bound
	to the energy. Numerical experiments demonstrate a gap of the order of $10^{-3}$ to $10^{-2}$
	between the upper and lower bounds. The Kantorovich potential of the multi-marginal transport
	problem is also approximated with a similar accuracy.
\end{abstract}

\section{Introduction}
\label{section: introduction} 
We propose a novel convex relaxation framework for solving multimarginal-transport type problems, in
the context of density functional theory for strictly correlated electrons. More precisely, we
consider the type problems that takes the form
\begin{gather}
\label{general MM}
\underset{\lambda_1,\ldots,\lambda_N,\mu \in \Pi(\lambda_1,\ldots,\lambda_N)}{\inf} \sum_{i=1}^N
g_i(\lambda_i) + \int_{X_1\times\cdots \times X_N} f(x_1,\ldots,x_N) d\mu(x_1,\ldots,x_N),\quad
\mathcal{A}_i(\lambda_i) = b_i,\ i=1,\ldots,N
\end{gather}
where $g_i(\cdot),i=1,\ldots,N$ are convex functionals, $\mathcal{A}_i, i=1,\ldots,N$ are some
linear operators, $ \Pi(\lambda_1,\ldots,\lambda_N)$ denotes the space of probability measures on
$X_1\times\cdots\times X_N$ with marginals $\lambda_1,\ldots,\lambda_N$. In this paper, the domain
of the cost $X_1\times \cdots \times X_N$ is discrete and the cost function $f$ has the form
\begin{equation}
\label{general cost}
f(x_1,\ldots,x_N) =  \sum_{i,j=1, i> j}^N C_{ij}(x_i,x_j).
\end{equation}
A particular situation that we are interested in is when $f(x_1,\ldots,x_N)$ and
$\mu(x_1,\ldots,x_N)$ are symmetric when any $x_i$ and $x_j$ are swapped, i.e., $g_i := g$, and
$C_{ij} := C$ for $i,j = 1,\ldots,N$. In such a situation, the task is to solve
\begin{equation}
\label{symmetric MM}
\underset{\lambda, \mu \in \Pi_{N,\text{sym}}(\lambda)}{\inf} g(\lambda) + \int_{X^N}
f(x_1,\ldots,x_N) d\mu(x_1,\ldots,x_N),\quad \mathcal{A}(\lambda) = b
\end{equation}
where $ \Pi_{N,\text{sym}}(\lambda)$ denotes the space of symmetric probability measures on $X^N$
with the marginals being $\lambda$. Solving this problem is particularly useful in the context of
density functional theory (DFT), where the density for many-electrons is indeed symmetric. A brief
introduction to how such a problem can arise in DFT when the electrons are strictly correlated is
given in Section \ref{section: DFT}. Although Problem \eqref{symmetric MM} is a linear programming
problem for discrete $X$, the domain of optimization is exponentially large for any practical
computation.



\subsection{Background on DFT for strictly correlated electrons}\label{section: DFT}
A key task in density functional theory is to determine the minimum of an energy functional
$E(\rho)$ of the 1-marginal
\begin{equation}
\rho(x) = \int_{X^{N-1}} \vert \psi(x,\ldots,x_N)\vert^2 dx_2 dx_3\ldots dx_N,
\end{equation}
where $\psi(x_1,\ldots,x_N)$ is a many-body wavefunction for $N$ electrons (due to the properties
of electrons $\vert \psi(x_1,\ldots,x_N) \vert^2$ is symmetric). In this paper, we consider an
energy functional introduced in \cite{gori2009density}
\begin{equation}
  \label{total energy}
  E(\rho) =  V_\text{ee}^\text{SCE}(\rho) + E_\text{kd}(\rho) + \int_{X} v_\text{ext}(x) \rho(x)dx,
\end{equation}
which is suitable for studying strongly correlated electrons. The functional $E_\text{kd}(\rho)$
corresponds to kinetic energy with some correction terms, $v_\text{ext}$ is some external potential
(for example potential exerted by nuclei), and the central object of the study is the strictly
correlated density functional $V_\text{ee}^\text{SCE}(\rho)$ defined as
\begin{equation}
  \label{SCE OT}
  V_\text{ee}^\text{SCE}(\rho):=\inf_{\lambda, \mu\in \Pi_{\text{N,sym}}(\lambda)} \int_{X^2}
  \sum_{i,j=1,i> j}^N \frac{1}{\| x_i - x_j \|} d\mu(x_1,\ldots,x_N),\quad \lambda = \rho.
\end{equation}
This framework of DFT gives rise to two following problems:
\begin{itemize}
\item Solving for the strictly correlated density functional $V^\text{SCE}_\text{ee}(\rho)$ via the
  optimization problem \eqref{SCE OT}. This is in fact the well known multi-marginal optimal
  transport problem.\newline
\item Direct minimization of the total energy functional $E(\cdot)$ in \eqref{total energy}, when
  the kinetic energy $E_\text{kd}(\rho)$ is either convex or negligible (thus can be dropped). In
  this case, the minimization problem takes the form
  \begin{eqnarray}
	&\ &\underset{\rho}{\inf}\quad V_\text{ee}^\text{SCE}(\rho) + E_\text{kd}(\rho) + \int_X v_\text{ext}(x)\rho(x) dx\cr
	&\Leftrightarrow & \underset{\rho}{\inf}\quad  E_\text{kd}(\rho) + \int_X v_\text{ext}(x)\rho(x) dx + \inf_{\lambda, \mu\in \Pi_{\text{N,sym}}(\lambda),\lambda=\rho} \int_{X^2} \sum_{i,j=1,i> j}^N \frac{1}{\| x_i - x_j \|} d\mu(x_1,\ldots,x_N)  \label{indirect total minimization}\\
	&\Leftrightarrow & \underset{\lambda, \mu\in \Pi_{\text{N,sym}}(\lambda)}\inf E_\text{kd}(\lambda) + \int_X v_\text{ext}(x)\lambda(x) dx +  \int_{X^2} \frac{1}{\| x_i - x_j \|} d\mu(x_1,\ldots,x_N)	\label{total minimization}
	\end{eqnarray}
\end{itemize}
Notice that the first problem, i.e. \eqref{SCE OT}, takes the form of \eqref{symmetric MM} when
$\mathcal{A}$ is the identity and $b=\rho$, while the second problem, presented in \eqref{total
  minimization}, takes the form of \eqref{symmetric MM} when the constraint $\mathcal{A}(\lambda)=b$
is absent.

\subsection{Our contributions}
In this paper, we work with an equivalent formulation of \eqref{symmetric MM} in terms of the
2-marginals. Although this seems to break the aforementioned complexity barrier for solving
\eqref{symmetric MM}, enforcing that the 2-marginals being the marginalization of a probability
measure on $X^N$, is non-trivial. Leveraging the results of \cite{friesecke2018breaking} concerning
the extreme points of the $N$-representable 2-marginals, we propose a semidefinite programming (SDP)
relaxation, SDP-Coulomb, to provide an outer approximation to the set of $N$-representable
2-marginals, therefore breaking the complexity barrier in optimizing the high-dimensional measure in
\eqref{symmetric MM}. The property of the proposed SDP is discussed in light of the results in
\cite{friesecke2018breaking}. We further propose a tighter convex relaxation SDP-Coulomb2 based on a
formulation of \eqref{symmetric MM} in terms of the 3-marginals. As the proposed convex relaxations
only provide lower bounds to the energy, we further propose rounding schemes to give upper
bounds. Numerical simulations show that the proposed approaches give a relative gap between the
upper and lower bounds of size $10^{-3}$ to $10^{-2}$, which in turn sets an upper bound on the
approximation accuracy. Before delving into the details, in Fig. \ref{figure: largescale}, we show
an example where we solve the multi-marginal transport problem \eqref{SCE OT} with $N=8$,
$\rho(x)\propto \exp(-x^2/\sqrt{\pi})$, and the discrete domain $X$ has size $\vert X \vert =
1600$. The running time is 2560s. Such problem size would be impossible to be solved by traditional
methods such as linear programming since it requires the storage of a tensor with $10^{25}$
entries. Moreover, in this example, we obtain an estimate of ${ V}^\text{SCE}_\text{ee}(\rho)$ with
3.6e-04 error.
\begin{figure}[!ht]
  \centering
  \includegraphics[width=0.47\columnwidth]{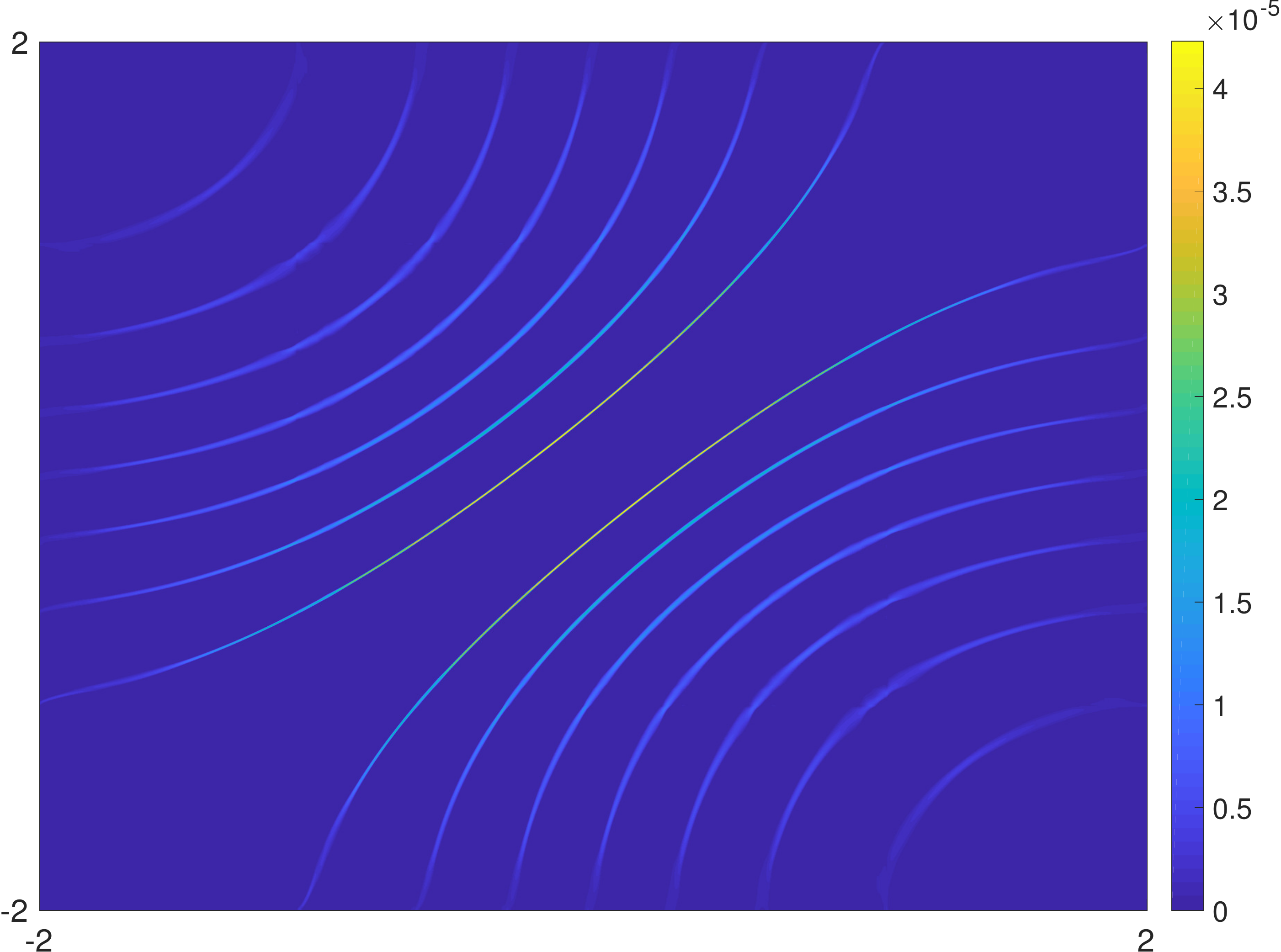}
  \caption{Approximate solution using SDP-Coulomb to the multimarginal transport problem \eqref{SCE
      OT} with $\rho(x)\propto \exp(-x^2/\sqrt{\pi})$ being the marginal. Here $N=8$, $\vert X \vert
    = 1600$, $d=1$. The error of the energy is 3.6e-04.}\label{figure: largescale}
\end{figure}

\subsection{Prior works}
The consideration of numerically solving an optimal transport problem with a Coulomb cost is a relatively new field. In
\cite{mendl2013kantorovich}, the dual problem to Problem \eqref{symmetric MM} is solved, via a
parameterization of the dual function. In \cite{chen2014numerical}, linear programming is applied to
solve the problem involving 2-electrons in 3D as part of a self-consistent DFT iterations. In
\cite{benamou2016numerical}, Sinkhorn scaling algorithm is applied to an entropic regularized
problem of \eqref{symmetric MM}. Although these methods have shown various levels of success in
practice, the constraints or variables involved grow exponentially in the number of electrons.

\subsection{Organization}
In Section \ref{section: method}, we detail the proposed SDP relaxation for Problem \eqref{symmetric
  MM} in terms of the 2-marginal. In Section \ref{section: properties}, we characterize the property
of the SDP relaxation. In Section \ref{section: 3mar}, a further tightening of the SDP relaxation is
proposed by formulating Problem \eqref{symmetric MM} in terms of the 3-marginal. In Section
\ref{section: implementation}, rounding schemes are provided to obtain an upper bound of the
energy. In Section \ref{section: numerical}, we demonstrate the effectiveness of the proposed method
through numerical examples.

\subsection{Notation}

In what follows, $I$ is used to denote the identity matrix as usual and we use $A^T$ to denote the matrix
transpose. For a $p$-dimensional tensor $T$, $T(j_1,j_2,\ldots,j_p)$ denotes its
$(i_1,\ldots,i_p)$-th entry. MATLAB notation ``$:$'' is used to extract a slice of a tensor. For
example for a matrix $A$, $A(:,i)$ gives the $i$-th column of the matrix. $\mathbf{1}$ is used to
denote an all-one vectors of appropriate size. For a matrix $A \in \mathbb{R}^{L\times L}$, the
operator $\diag(A)\in \mathbb{R}^L$ extracts the diagonal of $A$ and $\diag^*$ denotes the adjoint
of $\diag$. The notations $\odot$ and $\otimes$ are used to denote the Hadamard and tensor products
respectively. For a $p$-dimensional tensor $T$, $\| T \|_F^2$ is defined as
\begin{equation}
  \| T \|_F^2 := \sum_{i_1,\ldots,i_p} \vert T(i_1,\ldots,i_p) \vert^2
\end{equation}

\section{Proposed method}
\label{section: method}
In this section, we proposed an SDP relaxation to solve the equivalent problem of \eqref{symmetric
  MM} in terms of the 2-marginals. In terms of the 2-marginals $\gamma_{ij}$, the cost of
\eqref{symmetric MM} is
\begin{equation}
\label{2mar MM con}
g(\lambda) + \sum_{i,j=1, i<j}^N \int_{X^N} C_{ij}(x_i,x_j) d\gamma_{ij}(x_i,x_j) = g(\lambda) +
\frac{N(N-1)}{2} \int_{X^2} C(x,y) d\gamma(x,y),
\end{equation}
where $\gamma_{ij}(x_i,x_j) = \gamma(x_i,x_j)$ due to the symmetry of $\mu$. The 2-marginal $\gamma$
is called an $N$-representable measure, since it comes from the marginalization of a symmetric
probability measure on $X^N$. A more general definition for $k$-marginal is given below.
\begin{definition}
  A $k$-marginal on $X^N$ is called $N$-representable if it results from the marginalization of a
  symmetric probability distribution on $X^N$.
\end{definition}
As we consider a discrete state space $X$, Problem \eqref{2mar MM con} in terms of the discrete
2-marginals takes the form
\begin{eqnarray}
\label{2mar MM}
\min_{\gamma\in \mathbb{R}^{L\times L}} &\ & g(\gamma\mathbf{1})+  \frac{N(N-1)}{2} \Tr\big(C \gamma)  \\
\text{s.t.}&\ & \gamma\ \text{is $N$-representable}\cr
&\ & \diag(\gamma) = 0\cr
&\ & (\gamma \mathbf{1}) = b.\nonumber
\end{eqnarray}
Here we added a problem-dependent constraint $\diag(\gamma) = 0$, due to the fact the Coulomb cost
$C(\cdot,\cdot)$ is infinity when two arguments coincide.
To derive an SDP relaxation to \eqref{2mar MM}, one first needs a characterization of the
$N$-representable 2-marginals. For this, we leverage the following result from
\cite{friesecke2018breaking}, where $\conv(S)$ denotes the convex hull of a set $S$.
\begin{theorem}
  The set of discrete $N$-representable 2-marginals is  $\conv(\Gamma_2)$ where
  \begin{equation}
	\Gamma_2 = \left\{ \frac{N}{N-1} \lambda \lambda^T - \frac{1}{N-1} \diag^*(\lambda) \ \vert\ \lambda
    \in \{0,\nicefrac{1}{N},\nicefrac{2}{N},\ldots,1\}^{\vert X \vert },\quad \lambda^T \mathbf{1} =
    1 \right\}.
  \end{equation}
  Moreover $\Gamma_2$ is the set of extreme points of $\conv(\Gamma_2)$.
\end{theorem}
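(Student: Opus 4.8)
The plan is to establish the theorem in three stages: first, show that every element of $\conv(\Gamma_2)$ is an $N$-representable 2-marginal; second, show the reverse inclusion, that every $N$-representable 2-marginal lies in $\conv(\Gamma_2)$; and third, identify $\Gamma_2$ as precisely the extreme point set.

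For the first inclusion, I would start from a single ``quantized'' density $\lambda \in \{0,\tfrac1N,\ldots,1\}^{|X|}$ with $\lambda^T\mathbf 1 = 1$. Such a $\lambda$ corresponds to a multiset of $N$ points in $X$ (with multiplicities $N\lambda_x$), hence to a symmetric probability measure $\mu$ on $X^N$ supported on the $N!$ permutations of that configuration (a ``pure SCE state''). A direct computation of the 2-marginal of this $\mu$ should yield exactly $\tfrac{N}{N-1}\lambda\lambda^T - \tfrac{1}{N-1}\diag^*(\lambda)$: the off-diagonal $(x,y)$ entry is the probability that one electron sits at $x$ and another at $y$, which is $\tfrac{N\lambda_x\cdot N\lambda_y}{N(N-1)} = \tfrac{N}{N-1}\lambda_x\lambda_y$, while the diagonal must vanish since the points are distinct only up to multiplicity — one must be slightly careful about configurations where $\lambda$ has an entry $\ge \tfrac2N$, and I expect the correction term $-\tfrac{1}{N-1}\diag^*(\lambda)$ is exactly what accounts for the ``self-pairing'' bookkeeping so that the formula is uniform. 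Taking convex combinations of such $\mu$'s (and using linearity of marginalization) then shows $\conv(\Gamma_2)$ consists of $N$-representable 2-marginals.

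The reverse inclusion is the main obstacle and is where the cited machinery of \cite{friesecke2018breaking} does the real work. The key point is a Birkhoff--von Neumann / extreme-point argument for the multimarginal transport polytope: the set of symmetric probability measures on $X^N$ with prescribed 1-marginal (or here, with no 1-marginal constraint, the full symmetric simplex) has extreme points that are the uniform measures on permutation orbits of single configurations, and more refined, every symmetric $\mu$ can be written as a mixture of such orbit measures. Marginalizing down to 2-marginals is a linear map, so the image of this (compact, convex) set is the convex hull of the images of its extreme points — which is exactly $\conv(\Gamma_2)$. I would either invoke this structural theorem from \cite{friesecke2018breaking} directly, or, if a self-contained argument is wanted, reduce to showing that the linear image of a compact convex set is the convex hull of the images of extreme points (Minkowski/Krein--Milman) together with the characterization of the extreme points of the symmetric transport polytope.

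Finally, to show $\Gamma_2$ is precisely the extreme point set of $\conv(\Gamma_2)$: since $\conv(\Gamma_2)$ is by definition the convex hull of the finite set $\Gamma_2$, its extreme points form a subset of $\Gamma_2$, so it remains to show no element of $\Gamma_2$ is a proper convex combination of the others. Here I would use a separation/functional argument: for a fixed $\lambda$, exhibit a linear functional on symmetric matrices that is uniquely maximized over $\Gamma_2$ at $\tfrac{N}{N-1}\lambda\lambda^T - \tfrac{1}{N-1}\diag^*(\lambda)$ — a natural candidate is something like $\gamma \mapsto \Tr(\lambda\lambda^T\gamma)$ or a thresholded variant, exploiting that distinct quantized densities $\lambda$ give affinely independent rank-related matrices. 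The mild subtlety is that the map $\lambda \mapsto \tfrac{N}{N-1}\lambda\lambda^T - \tfrac{1}{N-1}\diag^*(\lambda)$ is injective on the quantized simplex (recoverable from the diagonal of the 1-marginal $\gamma\mathbf 1$, up to the $\diag^*$ correction), so the parametrization does not collapse; this injectivity plus the maximizing-functional argument gives that every point of $\Gamma_2$ is exposed, hence extreme.
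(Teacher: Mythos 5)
You should note at the outset that the paper itself does not prove this theorem: it is imported verbatim from \cite{friesecke2018breaking}, and the paper only supplies a proof of the zero-diagonal Corollary~1 built on top of it. So there is no in-paper argument to compare against, and your proposal must stand on its own. Its first two stages do: the 2-marginal of the symmetrized Dirac measure attached to a configuration with empirical measure $\lambda$ has off-diagonal entries $\frac{N\lambda_x\,N\lambda_y}{N(N-1)}$ and diagonal entries $\frac{N\lambda_x(N\lambda_x-1)}{N(N-1)}$, which is exactly $\frac{N}{N-1}\lambda\lambda^T-\frac{1}{N-1}\diag^*(\lambda)$; contrary to your aside, the diagonal does \emph{not} vanish when some $\lambda_x\ge 2/N$, but the formula is uniform, so no case distinction is needed. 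For the reverse inclusion you need neither Krein--Milman nor the ``structural theorem'' of \cite{friesecke2018breaking} (invoking the latter would be circular, since it is essentially the statement being proved): any symmetric probability measure on the finite set $X^N$ is, by grouping its atoms into permutation orbits, a finite convex combination of symmetrized Diracs, and marginalization is linear.

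The genuine gap is in your third stage. The functional $\gamma\mapsto\Tr(\lambda\lambda^T\gamma)$ does not expose $\gamma_\lambda:=\frac{N}{N-1}\lambda\lambda^T-\frac{1}{N-1}\diag^*(\lambda)$ within $\Gamma_2$: writing $\gamma_\mu$ for the element generated by $\mu$, one computes $\Tr(\lambda\lambda^T\gamma_\mu)=\frac{N}{N-1}(\lambda^T\mu)^2-\frac{1}{N-1}\sum_x\lambda_x^2\mu_x$, and already for $N=2$, $|X|=2$, $\lambda=(\tfrac12,\tfrac12)$ the values at $\mu=\lambda$ and $\mu=(1,0)$ coincide, while for non-uniform $\lambda$ such as $(\tfrac34,\tfrac14)$ with $N=4$ the dominant term $(\lambda^T\mu)^2$ is strictly larger at $\mu=e_1$ than at $\mu=\lambda$. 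The same obstruction blocks a direct transfer of the device in the paper's Theorem~\ref{theorem: extreme}, which succeeds only because there $\lambda_{\text{ext}}$ is constant ($=1/N$) on its support; and injectivity of $\lambda\mapsto\gamma_\lambda$ alone never yields extremality. A clean repair dispenses with exposing functionals: suppose $\gamma_\lambda=\sum_i a_i\gamma_{\mu_i}$ with $a_i>0$, $\sum_i a_i=1$, $\mu_i$ quantized. Applying $\gamma\mapsto\gamma\mathbf 1$ gives $\lambda=\sum_i a_i\mu_i$, so the $\diag^*$ terms on the two sides agree and cancel, leaving $\lambda\lambda^T=\sum_i a_i\mu_i\mu_i^T$, i.e.\ $\sum_i a_i(\mu_i-\lambda)(\mu_i-\lambda)^T=0$; each summand is positive semidefinite, hence $\mu_i=\lambda$ for every $i$, so $\gamma_\lambda\notin\conv(\Gamma_2\setminus\{\gamma_\lambda\})$ and every element of the finite set $\Gamma_2$ is an extreme point of $\conv(\Gamma_2)$.
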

Since we are interested in the 2-marginals $\gamma$ where the diagonal element is zero, we
characterize the subset $\tilde \Gamma_2\subset \Gamma_2$ with this extra zero constraint in the
following corollary. Let
\begin{equation}
\label{binary dist}
\mathcal{B}_N(X) = \{\lambda\in \mathbb{R}^{\vert X \vert}\ \vert\ \lambda^T\mathbf{1} =
1,\ \lambda(i) \in \{0,1/N\},\ i=1,\ldots,\vert X \vert \}
\end{equation}
which denotes the set of binarized probability vector on a discrete domain $X$. 
\begin{corollary}
  \label{corollary: extreme point}
  Let
  \begin{equation}
	\tilde \Gamma_2 = \left\{ \frac{N}{N-1} \lambda \lambda^T - \frac{1}{N-1} \diag^*(\lambda)
    \ \vert\ \lambda \in \mathcal{B}_N(X) \right\},
  \end{equation}
  then 
  \begin{equation}
	\label{extreme point}
	\conv(\tilde \Gamma_2) = \{\gamma\in \mathbb{R}^{\vert X\vert\times \vert X \vert} \ \vert
    \ \gamma\ \text{is $N$-representable},\ \diag(\gamma) = 0 \}.
  \end{equation}
  Moreover, $\tilde \Gamma_2$ is the extreme points of $\conv(\tilde \Gamma_2)$.
\end{corollary}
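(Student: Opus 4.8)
The plan is to obtain Corollary~\ref{corollary: extreme point} from the theorem of \cite{friesecke2018breaking} quoted above, by checking that imposing $\diag(\gamma)=0$ amounts to intersecting the polytope $\conv(\Gamma_2)$ with a supporting hyperplane; the intersection is then a face whose extreme points are exactly the vertices of $\conv(\Gamma_2)$ lying on that hyperplane, and I will identify those with $\tilde\Gamma_2$.

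The one computation I would carry out is the following. For any $\lambda$ in the grid $\{0,\nicefrac1N,\ldots,1\}^{|X|}$ with $\lambda^T\mathbf 1=1$ and $\gamma=\frac{N}{N-1}\lambda\lambda^T-\frac1{N-1}\diag^*(\lambda)$, using $\diag(\lambda\lambda^T)=\lambda\odot\lambda$ and $\diag(\diag^*(\lambda))=\lambda$,
\begin{equation*}
\diag(\gamma)(i)=\frac{N}{N-1}\lambda(i)^2-\frac1{N-1}\lambda(i)=\frac{\lambda(i)\,(N\lambda(i)-1)}{N-1}\ \geq\ 0,
\end{equation*}
with equality iff $\lambda(i)\in\{0,\nicefrac1N\}$, simply because every admissible $\lambda(i)$ lies in $\{0,\nicefrac1N,\nicefrac2N,\ldots,1\}$. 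Two consequences: first, $\diag(\gamma)=0\iff\lambda\in\mathcal B_N(X)$, so $\tilde\Gamma_2=\{\gamma\in\Gamma_2:\diag(\gamma)=0\}$; second, $\diag(\gamma)\geq0$ entrywise for every $\gamma\in\Gamma_2$, and since $\diag$ is linear this extends to $\diag(\gamma)\geq0$ for every $\gamma\in\conv(\Gamma_2)$, i.e.\ for every $N$-representable 2-marginal.

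With this in hand \eqref{extreme point} is short. The inclusion ``$\subseteq$'' holds because $\tilde\Gamma_2\subseteq\Gamma_2$ gives $\conv(\tilde\Gamma_2)\subseteq\conv(\Gamma_2)$ (which the quoted theorem identifies with the set of $N$-representable 2-marginals), while the constraint $\diag(\gamma)=0$ is linear and hence inherited by convex combinations of elements of $\tilde\Gamma_2$. For ``$\supseteq$'', given an $N$-representable $\gamma$ with $\diag(\gamma)=0$, write $\gamma=\sum_k c_k\gamma_k$ with $\gamma_k\in\Gamma_2$, $c_k>0$, $\sum_k c_k=1$ (possible since $\Gamma_2$ is finite and, by the theorem, $\gamma\in\conv(\Gamma_2)$); then $0=\diag(\gamma)=\sum_k c_k\diag(\gamma_k)$ is a combination with strictly positive weights of the entrywise-nonnegative vectors $\diag(\gamma_k)$, forcing each $\diag(\gamma_k)=0$, hence each $\gamma_k\in\tilde\Gamma_2$ and $\gamma\in\conv(\tilde\Gamma_2)$.

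For the extreme-point claim, any extreme point of $\conv(\tilde\Gamma_2)$ must lie in the finite generating set $\tilde\Gamma_2$; conversely each $\gamma\in\tilde\Gamma_2\subseteq\Gamma_2$ is an extreme point of $\conv(\Gamma_2)$ by the quoted theorem, and since $\gamma$ also lies in the subset $\conv(\tilde\Gamma_2)\subseteq\conv(\Gamma_2)$, any decomposition $\gamma=\tfrac12(a+b)$ with $a,b\in\conv(\tilde\Gamma_2)$ is in particular a decomposition inside $\conv(\Gamma_2)$, which forces $a=b=\gamma$; hence $\gamma$ is extreme in $\conv(\tilde\Gamma_2)$. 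I expect the only point that needs care is establishing $\diag(\gamma)\geq0$ on all of $\Gamma_2$ — not merely on $\tilde\Gamma_2$ — since that nonnegativity is precisely what makes the ``each summand vanishes'' step legitimate; everything after that is the standard face-of-a-polytope argument.
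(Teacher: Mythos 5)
Your proposal is correct and follows essentially the same route as the paper: you write an $N$-representable $\gamma$ with zero diagonal as a convex combination of elements of $\Gamma_2$, observe that each generator's diagonal, $\diag(\gamma_k)(i)=\lambda(i)(N\lambda(i)-1)/(N-1)$, is entrywise nonnegative on the grid $\{0,\nicefrac1N,\ldots,1\}$ (the paper's $N\lambda\odot\lambda-\lambda\ge 0$), and conclude from the strictly positive weights that every generator must lie in $\tilde\Gamma_2$, with the extreme-point claim handled by the same inclusion $\conv(\tilde\Gamma_2)\subseteq\conv(\Gamma_2)$ argument. The face-of-a-polytope framing is just packaging around the identical computation, so no substantive difference.
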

For completeness, a short proof of Corollary \ref{corollary: extreme point} is presented in Section
\ref{section: properties}. With this characterization, an equivalent formulation of \eqref{2mar MM}
is obtained as
\begin{eqnarray}
  \min_{\gamma\in \mathbb{R}^{\vert X \vert \times \vert X \vert }} &\ & g(\gamma\mathbf{1})+  \frac{N(N-1)}{2} \Tr\big(C \gamma)  \\
  \text{s.t.} &\ & \gamma\in \conv(\tilde\Gamma_2)\cr
  &\ & \mathcal{A}(\gamma \mathbf{1}) = b.\nonumber
\end{eqnarray}
We claim that this is also equivalent to the following minimization problem:
\begin{eqnarray}
  \label{2mar MM2}
  \min_{\gamma,\lambda,a} &\ & g(\gamma \mathbf{1})+  \frac{N(N-1)}{2} \Tr(C \gamma)  \quad  \\
  \text{s.t.} &\ & \gamma = \frac{N}{N-1} \sum_{i=1}^m a(i) \lambda_i \lambda_i^T- \frac{1}{N-1} \diag^*(\sum_{i=1}^m a(i)\lambda_i)\cr
  &\ & \sum_{i=1}^m a(i) =1,\ a(i)\geq 0,\ i=1,\ldots,m\cr
  &\ & \lambda_i^T \mathbf{1} = 1,\ i=1,\ldots,m\cr
  &\ & \lambda_i\in \{0,1/N\}^{\vert X \vert  },\ i=1,\ldots,m\cr
  &\ & \mathcal{A}(\gamma \mathbf{1}) = b.\nonumber
\end{eqnarray}
Here, the first four constraints are equivalent to $\gamma$ being an element in $\conv(\tilde
\Gamma_2)$. The integer $m$ specifies the number of elements in $\tilde \Gamma_2$ needed for
representing $\gamma$, which depends on the number of linear constraints $\mathcal{A}(\gamma
\mathbf{1}) = b$. For the purpose of this section it is not
important to know what $m$ is, and we can just treat it as an arbitrary integer. A detail discussion on what $m$ is for the problem considered is provided in Section \ref{section: rounding 2}.


\subsection{Convex relaxation}
Problem \eqref{2mar MM2} involves optimizating over the set $\mathcal{B}_N(X)$, which has a
combinatorial complexity in the worst case. To cope with this issue, we propose the following convex
relaxation to Problem \eqref{2mar MM} and Problem \eqref{2mar MM2}:
\begin{eqnarray}
  \label{2mar SDPCoulomb}
  \min_{\gamma,\Lambda \in \mathbb{R}^{\vert X \vert \times \vert X \vert }} &\ & g(\gamma \mathbf{1})+
  \frac{N(N-1)}{2} \Tr(C \gamma)  \quad \text{(SDP-Coulomb)} \\
  \text{s.t.} &\ & \gamma = \frac{N}{N-1} \Lambda - \frac{1}{N-1} \diag^*(\Lambda \mathbf{1})\cr
  &\ & \mathcal{A}(\Lambda \mathbf{1}) = b\cr
  &\ &\Lambda \succeq 0\cr
  &\ &\Lambda \geq 0 \cr
  &\ & \mathbf{1}^T \Lambda \mathbf{1} = 1\cr
  &\ & \diag(\Lambda) =\frac{1}{N} \Lambda \mathbf{1}.\nonumber
\end{eqnarray}
The details of going from \eqref{2mar MM2} to \eqref{2mar SDPCoulomb} are presented in the
subsequent sections.

\subsubsection{Changing the variables to $\Lambda$}
We start to derive SDP-Coulomb from Problem \eqref{2mar MM2}. Instead of working with both sets of
variables $\{\lambda_i\}_{i=1}^{m}$ and $a$ as in Problem \eqref{2mar MM2}, we will only work with a
single matrix variable $\Lambda$. First let
\begin{equation}
  \label{Lambda def}
  \Lambda := \sum_{i=1}^m a(i) \lambda_i \lambda_i^T,\quad
  \lambda_i\in\mathcal{B}_N(X),\ i=1,\ldots,m.
\end{equation}
Since
\begin{equation}
\Lambda \mathbf{1} =  \sum_{i=1}^m a(i) \lambda_i \lambda_i^T \mathbf{1} = \sum_{i=1}^m a(i) \lambda_i,
\end{equation}
in terms of $\Lambda$ the 2-marginal $\gamma$ in \eqref{2mar MM2} becomes
\begin{equation}
\gamma = \frac{N}{N-1} \Lambda - \frac{1}{N-1} \diag^*(\Lambda \mathbf{1}).
\end{equation}
Notice that with such a change of variable, 
\begin{equation}
\gamma \mathbf{1} = \Lambda \mathbf{1}.
\end{equation}
\subsubsection{Constraints on $\Lambda$}
The variable $\Lambda$ defined in \eqref{Lambda def} belongs to a non-convex set as it is a quadratic form of the binarized vectors $\lambda_1,\ldots,\lambda_m$. In order to obtain the convex program SDP-Coulomb, we only enforce certain necessary conditions of  $\Lambda$ having the form in \eqref{Lambda def}. First
\begin{equation}
\label{C1}
\Lambda\succeq 0
\end{equation}
due to the fact that $a\geq 0$ in \eqref{Lambda def}. Then
\begin{equation}
\label{C2}
\Lambda \geq 0 
\end{equation}
since $a,\lambda_1,\ldots,\lambda_m \geq 0$ in \eqref{Lambda def}. Since $\sum_{i=1}^m
a(i)=1,\lambda_i^T \mathbf{1} = 1, i=1,\ldots,m$,
\begin{equation}
  \label{C3}
  \mathbf{1}^T \Lambda \mathbf{1}= 1.
\end{equation}
As each $\lambda_i \in B_N(X)$, therefore
\begin{equation}
  \lambda_i \odot \lambda_i  = \lambda_i/N,\quad i=1,\ldots,m
\end{equation}
implying
\begin{equation}
  \label{C4}
\diag(\Lambda) = \frac{1}{N} \Lambda \mathbf{1}.
\end{equation}
Together, the constraints \eqref{C1}, \eqref{C2}, \eqref{C3} and \eqref{C4} give the last four
constraints in SDP-Coulomb.

\subsection{Duality and the Kantorovich potential}
In \cite{mendl2013kantorovich}, the dual problem to \eqref{SCE OT}:
\begin{eqnarray}
\label{vsce}
V^\text{SCE}_\text{ee}(\rho) = \min_{v \in \mathbb{R}^{\vert X \vert }} &\ & v^T \rho \\
\text{s.t.} 
&\ & \sum_{\substack{k,l=1 \\ k\leq l}}^N C(i_k,i_l) - \frac{1}{N}\sum_{k=1}^N v(i_k) \geq 0,\ \forall (i_1,\ldots,i_N)\cr
\end{eqnarray}
is used to solve for $V^\text{SCE}_\text{ee}(\rho)$. This is called the Kantorovich problem and the dual variable $v$ is called the Kantorovich potential. Although the size of the optimization variable is reduced to $\vert X \vert$ when comparing to \eqref{SCE OT}, the dual formulation has number of constraints being exponential in $N$. We can also use SDP-Coulomb to provide an approximation to the Kantorovich potential. Let
\begin{equation}
\mathcal{A}(\Lambda \mathbf{1})=b \rightarrow \Lambda\mathbf{1} = \rho
\end{equation}
and $g=0$ in the cost, we have
\begin{eqnarray}
\label{vsce approx}
\hat V^\text{SCE}_\text{ee}(\rho) := \min_{\Lambda \in \mathbb{R}^{\vert X \vert \times \vert X \vert }} &\ & \frac{N^2}{2} \Tr[\left(C- \text{diag}^*(\text{diag}(C))\right) \Lambda] \\
\text{s.t.} 
&\ & w:\Lambda \mathbf{1} = \rho\cr
&\ & Y:\Lambda \succeq 0\cr
&\ & Z:\Lambda \geq 0 \cr
&\ & u:\diag(\Lambda) =\frac{1}{N} \Lambda \mathbf{1}\nonumber,
\end{eqnarray}
where the variables in front of the colon are the dual variables corresponding to the constraints. $\hat V^\text{SCE}_\text{ee}(\rho)$ can be seen as an approximation to $V^\text{SCE}_\text{ee}(\rho)$ in \eqref{vsce}. The dual to \eqref{vsce approx} is then
\begin{eqnarray}
\label{vsce dual}
\hat V^\text{SCE}_\text{ee}(\rho) = \max_{\substack{w \in \mathbb{R}^{\vert X \vert }, \\ Y,Z \in \mathbb{R}^{\vert X \vert \times \vert X \vert }}} &\ &  w^T \rho\\
\text{s.t.} 
&\ & \frac{N^2}{2} [C- \text{diag}^*(\text{diag}(C))] - \frac{1}{2}(\mathbf{1}w^T+ w \mathbf{1}^T)\cr &\ &\quad-\text{diag}^*(u)+\frac{1}{2N}(\mathbf{1}u^T+ u \mathbf{1}^T) = Y+Z, \cr
&\ & Y\succeq 0,\ Z\geq 0.\cr
\end{eqnarray}
The dual variable $w$ can be seen as an approximation to the Kantorovich potential $v$ in $\eqref{vsce}$. As pointed out in the literatures of DFT \cite{malet2012strong, mendl2013kantorovich, chen2014numerical}, the Kantorovich potential allows the functional derivative of $V^\text{SCE}_\text{ee}(\cdot)$ to be taken. From \eqref{vsce dual}, we make the following identification:
\begin{equation}
\frac{d V^\text{SCE}_\text{ee}(\rho) }{d \rho} \approx \frac{d \hat V^\text{SCE}_\text{ee}(\rho) }{d \rho} = w^*,
\end{equation}
where $w^*$ is the optimizer of \eqref{vsce dual}. The equality follows from the fact that for 
\begin{equation}
g(x) = \sup_{\alpha\in \Omega} f_\alpha(x),\quad \alpha^* =\text{argsup}_{\alpha\in \Omega} f_\alpha(x),
\end{equation}
where $ f_\alpha(x), \alpha\in \Omega$ are convex functions, a subgradient of $f_{\alpha^*}(x)$ is a subgradient of $g(x)$ \cite{boyd2004convex}. Obtaining the approximate functional derivative of $\hat V^\text{SCE}_\text{ee}(\cdot)$ can provide a mean to optimize \eqref{indirect total minimization} via self-consistent field iterations (for example in \cite{chen2014numerical}), when the dependence of $E_{kd}(\cdot)$ on $\rho$ is not analytically given. 
\section{Properties of SDP-Coulomb}
\label{section: properties}
The convex program SDP-Coulomb in Section \ref{section: method} intends to provide an outer approximation
to the 2-marginals. In this section, we show that the extreme points of the $N$-representable
2-marginals are contained in the set of the extreme points of the domain of SDP-Coulomb. We first
give the proof of Corollary \ref{corollary: extreme point}:
\begin{proof}
  It is clear in \eqref{extreme point} that the left hand side belongs to the right hand side. Now
  if $\gamma$ is $N$-representable, then
  \begin{equation}
	\gamma = 	\sum_{i=1}^m a(i) \bigg(\frac{N}{N-1}\lambda_i \lambda_i^T - \frac{1}{N-1}\diag^*(\lambda_i )\bigg), \ a \geq 0,\ a^T \mathbf{1} = 1,\ \lambda_i\in \{0,1/N\ldots,N/N\}^{\vert X \vert}.
	\end{equation}
  for $a\in \mathbb{R}^m$. The constraint $\diag(\gamma) = 0$ gives
  \begin{equation}
	\label{sum = 0}
	\sum_{i=1}^m a(i) (N \lambda_i\odot \lambda_i - \lambda_i )= 0
  \end{equation}
  where $\odot$ denotes pointwise product. Due to the domain of $\lambda_i$, $N \lambda_i\odot
  \lambda_i - \lambda_i \geq 0$. Then together with $a(i)\geq0$, the equation \eqref{sum = 0}
  implies $a(i)=0$ or $N \lambda_i\odot \lambda_i = \lambda_i$ for each $i$. This shows that
  $\lambda_i \in \{0,1/N\}^{\vert X \vert}$, implying in \eqref{extreme point} the right hand side
  belongs to the left hand side. Finally, it is clear that $\tilde \Gamma_2$ is the set of extreme
  points of $\conv(\tilde \Gamma_2)$, since $\tilde \Gamma_2$ is a subset of the extreme points
  $\conv(\Gamma_2)$ and $\conv(\tilde \Gamma_2)\subseteq \conv(\Gamma_2)$.
\end{proof}

In the following theorem, we show that $\tilde \Gamma_2$ also belongs to the set of the extreme points
for the feasible set of $\gamma$ used in Problem SDP-Coulomb in \eqref{2mar SDPCoulomb}, when the
constraint $\mathcal{A}(\Lambda \mathbf{1}) = b$ is absent. This shows that our convex relaxation is
rather tight.
\begin{theorem}
	\label{theorem: extreme}
  $\tilde \Gamma_2$ is a subset of the extreme points of the domain
	\begin{equation}
	  \label{SDP domain}
	  D = \left\{ \frac{N}{N-1} \Lambda - \frac{1}{N-1} \diag^*(\Lambda \mathbf{1})  \ \bigg\vert\ \Lambda\succeq 0,\ \Lambda\geq 0,\ \mathbf{1}^T \Lambda \mathbf{1}= 1,\ \diag(\Lambda) = \frac{1}{N} \Lambda \mathbf{1}  \right\},
	\end{equation}
    which is the feasible set of $\gamma$ in \eqref{2mar SDPCoulomb} when the constraint
    $\mathcal{A}(\Lambda \mathbf{1}) = b$ is absent.
\end{theorem}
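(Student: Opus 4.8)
The plan is to transfer the question from the $\gamma$–variable to the $\Lambda$–variable, where the extreme points of $\tilde\Gamma_2$ become rank-one matrices, and then run a standard positive–semidefinite domination argument. The first observation is that the linear map $\phi(\Lambda) := \frac{N}{N-1}\Lambda - \frac{1}{N-1}\diag^*(\Lambda\mathbf 1)$ appearing in \eqref{SDP domain} is injective on $\mathbb{R}^{|X|\times|X|}$: since $\diag^*(v)\mathbf 1 = v$ for every vector $v$, applying $\mathbf 1$ to both sides gives $\phi(\Lambda)\mathbf 1 = \Lambda\mathbf 1$, and then $\Lambda = \frac{N-1}{N}\phi(\Lambda) + \frac1N\diag^*(\phi(\Lambda)\mathbf 1)$ recovers $\Lambda$ from its image. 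Hence $D = \phi(\mathcal L)$, where $\mathcal L$ is the convex set of $\Lambda$ with $\Lambda\succeq 0$, $\Lambda\ge 0$, $\mathbf 1^T\Lambda\mathbf 1=1$, $\diag(\Lambda)=\frac1N\Lambda\mathbf 1$ (the $\Lambda$–feasible set of \eqref{2mar SDPCoulomb} without $\mathcal A(\Lambda\mathbf 1)=b$), and an injective linear map sends extreme points of $\mathcal L$ to extreme points of $D$: if $x$ is extreme in $\mathcal L$ and $\phi(x)=\tfrac12(\phi(x_1)+\phi(x_2))$ with $x_1,x_2\in\mathcal L$, injectivity gives $x=\tfrac12(x_1+x_2)$ and extremality gives $x_1=x_2=x$. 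Since every element of $\tilde\Gamma_2$ equals $\phi(\lambda\lambda^T)$ for some $\lambda\in\mathcal B_N(X)$ (because $\lambda\lambda^T\mathbf 1=\lambda$), it suffices to show each such $\lambda\lambda^T$ is an extreme point of $\mathcal L$.

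For this last step I would first check $\lambda\lambda^T\in\mathcal L$: the conditions $\lambda\lambda^T\succeq 0$, $\lambda\lambda^T\ge 0$, and $\mathbf 1^T\lambda\lambda^T\mathbf 1=(\lambda^T\mathbf 1)^2=1$ are immediate, and since $\lambda(i)\in\{0,1/N\}$ one has $\lambda\odot\lambda=\lambda/N$, which matches $\frac1N\lambda\lambda^T\mathbf 1=\frac1N\lambda$, so $\diag(\lambda\lambda^T)=\frac1N\lambda\lambda^T\mathbf 1$. Now suppose $\lambda\lambda^T=\tfrac12(\Lambda_1+\Lambda_2)$ with $\Lambda_1,\Lambda_2\in\mathcal L$. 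From $\Lambda_2\succeq 0$ we get $0\preceq\Lambda_1\preceq 2\lambda\lambda^T$, hence $\ker(\lambda\lambda^T)\subseteq\ker(\Lambda_1)$, so the range of the symmetric matrix $\Lambda_1$ lies in $\operatorname{span}\{\lambda\}$, which is one-dimensional since $\lambda^T\mathbf 1=1$; thus $\Lambda_1=c_1\lambda\lambda^T$ for some $c_1\ge 0$, and likewise $\Lambda_2=c_2\lambda\lambda^T$. The normalization $\mathbf 1^T\Lambda_i\mathbf 1=1$ forces $c_1=c_2=1$, so $\Lambda_1=\Lambda_2=\lambda\lambda^T$, which is exactly extremality.

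I expect the one genuinely delicate point to be establishing the injectivity of $\phi$, since it is precisely this that makes the reduction from extreme points of $D$ to extreme points of the $\Lambda$–feasible set valid; the remainder rests on the elementary facts that $0\preceq A\preceq B$ implies $\operatorname{range}(A)\subseteq\operatorname{range}(B)$ and that a positive semidefinite matrix with one-dimensional range is a nonnegative multiple of a rank-one matrix $\lambda\lambda^T$. It is also worth noting that the extremality argument never uses $\Lambda\ge 0$ or $\diag(\Lambda)=\frac1N\Lambda\mathbf 1$: those constraints only shrink $\mathcal L$, so extremality in the larger set (positive semidefiniteness together with the single normalization $\mathbf 1^T\Lambda\mathbf 1=1$) combined with $\lambda\lambda^T\in\mathcal L$ already gives extremality in $\mathcal L$ itself.
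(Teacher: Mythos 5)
Your proof is correct, but it takes a genuinely different route from the paper's. The paper shows each $\gamma_\text{ext}\in\tilde\Gamma_2$ is in fact an \emph{exposed} point of $D$: it constructs an explicit cost matrix $B=\lambda_\text{ext}\lambda_\text{ext}^T+\frac{1}{N-1}\mathbf 1\,\diag(\lambda_\text{ext}\lambda_\text{ext}^T)^T$ so that, after substituting $\gamma=\frac{N}{N-1}\Lambda-\frac{1}{N-1}\diag^*(\Lambda\mathbf 1)$, the functional $\Tr(B\gamma)$ becomes $\frac{N}{N-1}\Tr(\lambda_\text{ext}\lambda_\text{ext}^T\Lambda)$, whose unique optimizer over the relaxed set $\{\Lambda\succeq 0,\ \Tr(\Lambda)=1/N\}$ is $\Lambda=\lambda_\text{ext}\lambda_\text{ext}^T$; uniqueness of the optimizer of a linear functional then rules out any nontrivial convex decomposition of $\gamma_\text{ext}$ in $D$. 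You instead (i) note that the parametrization $\Lambda\mapsto\gamma$ is linear and injective, with the explicit inverse $\Lambda=\frac{N-1}{N}\gamma+\frac{1}{N}\diag^*(\gamma\mathbf 1)$ obtained from $\gamma\mathbf 1=\Lambda\mathbf 1$, so that extreme points of the $\Lambda$-feasible set are carried to extreme points of $D$ --- a transfer step the paper leaves implicit --- and (ii) certify extremality of $\lambda\lambda^T$ directly, in the even larger set $\{\Lambda\succeq 0,\ \mathbf 1^T\Lambda\mathbf 1=1\}$, via the domination fact that $0\preceq A\preceq B$ forces $\operatorname{range}(A)\subseteq\operatorname{range}(B)$, which pins every summand down to a nonnegative multiple of $\lambda\lambda^T$ and the normalization fixes the multiple to be $1$. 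Both proofs exploit the same underlying phenomenon (relaxing to a pure PSD-plus-normalization set where the rank-one structure is easy to certify), but yours avoids constructing the supporting functional $B$ and so proves extremality rather than the stronger exposedness, while being arguably more elementary and making the $\gamma$-versus-$\Lambda$ bookkeeping fully explicit; the one point you flag as delicate, injectivity of the map, is in fact immediate from the inverse formula you wrote down.
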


\begin{proof}
	First $\tilde \Gamma_2$ is a subset of  $D$. We further need to show that each 
	\begin{equation}
	\label{gamma ext}
	\gamma_\text{ext} = \frac{N}{N-1} \lambda_\text{ext} \lambda_\text{ext}^T- \frac{1}{N-1} \diag^*(\lambda_\text{ext}), \quad \lambda_\text{ext}\in\mathcal{B}_N(X)
	\end{equation}
	in $\tilde \Gamma_2$ is also an extreme point in $D$. To this end, we simply show for every $\gamma_\text{ext}$, there exists some cost $B$ such that the unique maximizer to 
	\begin{equation}
	\label{extreme pt max}
	\max_\gamma \Tr(B\gamma),\quad \text{s.t.}\ \gamma \in D
	\end{equation}
	is $\gamma_\text{ext}$. If $\gamma_\text{ext}$ is the unique maximizer to \eqref{extreme pt max}, then $\gamma_\text{ext}\neq \sum_i a(i) \gamma_i$, where $\forall i\ \gamma_i\in D, a(i)> 0$, and $\sum_i a(i) = 1$. Otherwise, $\Tr(B\gamma_\text{ext}) = \sum_i a(i)\Tr(B \gamma_i)<\sum_i a(i) \Tr(B\gamma_\text{ext}) = \Tr(B\gamma_\text{ext})$ where the inequality is due to the fact that $\gamma_\text{ext}$ uniquely minimizes $\Tr(B\gamma)$. Let
	\begin{equation}
	B := \lambda_\text{ext} \lambda_\text{ext} ^T+ \frac{1}{N-1}\mathbf{1}\diag( \lambda_\text{ext} \lambda_\text{ext} ^T)^T.
	\end{equation}
	Then
	\begin{eqnarray}
	\Tr(B\gamma) &=& \Tr( \lambda_\text{ext} \lambda_\text{ext} ^T\gamma ) +  \Tr(\frac{1}{N-1}\mathbf{1}\diag( \lambda_\text{ext} \lambda_\text{ext} ^T)^T\gamma)\cr
	&=& \Tr( \lambda_\text{ext} \lambda_\text{ext} ^T\gamma ) + \Tr\bigg(\frac{1}{N-1}  \lambda_\text{ext} \lambda_\text{ext} ^T \diag^*(\gamma\mathbf{1})\bigg) \cr
	&=& \Tr\bigg( \lambda_\text{ext} \lambda_\text{ext} ^T \big(\gamma + \frac{1}{N-1}  \diag^*(\gamma\mathbf{1})\big)\bigg)
	\end{eqnarray}
	Plugging in $\gamma = \frac{N}{N-1} \Lambda - \frac{1}{N-1} \diag^*(\Lambda \mathbf{1})\in D$, \eqref{extreme pt max} is therefore
	\begin{eqnarray}
	\label{ext pt max1}
	\min_{\gamma,\Lambda} &\ &\frac{N}{N-1}\Tr(\lambda_\text{ext} \lambda_\text{ext} ^T\Lambda ) \\
	\text{s.t.} &\ & \gamma = \frac{N}{N-1} \Lambda - \frac{1}{N-1} \diag^*(\Lambda \mathbf{1})\cr
	&\ &\Lambda \succeq 0,\ \Lambda \geq 0, \ \mathbf{1}^T \Lambda \mathbf{1} = 1,\ \diag(\Lambda) =\frac{1}{N} \Lambda \mathbf{1}.\nonumber
	\end{eqnarray}
	To show $\gamma_\text{ext}$ in \eqref{gamma ext} is the unique minimizer of \eqref{ext pt max1}, it suffices to show $\gamma_\text{ext}$ is the unique minimizer for 
	\begin{eqnarray}
	\label{ext pt max2}
	\min_{\gamma,\Lambda} &\ &\frac{N}{N-1}\Tr(\lambda_\text{ext} \lambda_\text{ext} ^T\Lambda ) \\
	\text{s.t.} &\ & \gamma = \frac{N}{N-1} \Lambda - \frac{1}{N-1} \diag^*(\Lambda \mathbf{1}),\ \Lambda \succeq 0,\ \Tr(\Lambda) = 1/N,\nonumber
	\end{eqnarray}
	since the domain of \eqref{ext pt max1} is contained within \eqref{ext pt max2}. It is clear
    that the unique minimizer to \eqref{ext pt max2} is $\Lambda = \lambda_\text{ext}
    \lambda_\text{ext}^T$, implying that $\gamma_\text{ext}$ is the unique minimizer.
\end{proof}

\section{Tightening the convex relaxation}\label{section: 3mar}
Though Theorem \ref{theorem: extreme} shows that our convex relaxation with the $2$-marginals also contains $\tilde \Gamma_2$ as the extreme points, it may contain orther extreme points that do not come $\tilde \Gamma_2$. To further restrict the domain of optimization in SDP-Coulomb, one can consider applying convex relaxation to the $k$-marginals. In this section, we focus on the case of the 3-marginals. Let
\begin{equation}
\bar C(i,j,k) = C(i,j)+C(j,k)+C(k,i),\quad i,j,k=1,\ldots,\vert X \vert.
\end{equation}
Let the $N$-representable 3-marginal of $\mu$ be $\kappa$. In terms of $\bar C$ and $\kappa$, the cost of \eqref{symmetric MM} becomes
\begin{equation}
\label{3mar MM}
g(\lambda)+  \frac{N(N-1)(N-2)}{6} \sum_{i,j,k=1}^{\vert X \vert} \bar C(i,j,k)\kappa(i,j,k).
\end{equation}
In the following sections, we work out the domain of $\kappa$ in order to perform minimization. We
follow the derivation in \cite{friesecke2018breaking} in which the set $\Gamma_2$ is derived.


\subsection{The extreme points of the symmetric discrete distribution on $X^N$}
Let the set of symmetric discrete $N$-marginal be defined as
\begin{equation}
  \Pi_{N,\text{sym}} = \left\{ \mu\in ({\mathbb{R}^{\vert X \vert}})^N \ \vert\ \mu\ \text{is
    symmetric}, \mu\geq 0, \sum_{i_1,\ldots,i_N = 1}^{\vert X \vert} \mu(i_1,\ldots,i_N) = 1\right\}.
\end{equation}
Let $e_l \in \mathbb{R}^{\vert X \vert}$ be defined as $e_l(j)=\delta_{lj}$. For the set of
probability measures on $X^N$, an extreme point is
\begin{equation}
  \label{extreme Nd}
  e_{c_1}\otimes \ldots \otimes e_{c_N},
\end{equation}
for some $c_1,\ldots,c_N \in \{1,\ldots, \vert X \vert \}$. Therefore for the set of symmetric
measure $\Pi_{N,\text{sym}}$, an extreme point can be obtained from symmetrizing \eqref{extreme Nd},
giving rise to the set
\begin{equation}
  \Gamma_N = \left\{\frac{1}{N!} \sum_{\sigma\in S(N)} e_{c_{\sigma(1)}} \otimes \ldots \otimes
  e_{c_{\sigma(N)}} \ \vert \ c_1,\ldots, c_N \in \{1,\ldots,\vert X \vert \}\right\}
\end{equation}
where $S(N)$ is the symmetric group over $N$ numbers. For physical measure of the electrons, we look at a restricted set 
\begin{equation}
\tilde \Pi_{N,\text{sym}} =\left\{ \mu\in \Pi_{N,\text{sym}} \ \vert\ \mu(i_1,\ldots,i_N) = 0, \ \text{if}\ i_k = i_l\ \forall k,l = 1,\ldots,N\right \}
\end{equation}
which ensures two electrons cannot be in the same state. A derivation similar to Corollary \ref{corollary: extreme point} reveals that
\begin{equation}
\conv(\tilde \Gamma_N) = \tilde \Pi_{N,\text{sym}} 
\end{equation}
where
\begin{equation}
\tilde \Gamma_N = \left\{\frac{1}{N!} \sum_{\sigma\in S(N)} e_{c_{\sigma(1)}}  \otimes \ldots \otimes e_{c_{\sigma(N)}} \ \vert \ c_1,\ldots, c_N \in \{1,\ldots,\vert X \vert \},\ c_i\neq c_j \ \forall\ i,j\in N,\ i\neq j\right\}.
\end{equation}

\subsection{Convex hull of the set of $N$-representable 3-marginals}
To get a description to the set of $N$-representable 3-marginals in order to restrict $\kappa$ in
\eqref{3mar MM}, we marginalize the measures in $\tilde \Pi_{N,\text{sym}}$. Since $ \tilde
\Pi_{N,\text{sym}} = \conv(\tilde \Gamma_N)$, it suffices to marginalize the elements in $\tilde
\Gamma_N$. Picking an arbitrary element in $\tilde \Gamma_N$, then its 3-marginal is
\begin{eqnarray}
\label{3mar ext1}
&\ & \frac{1}{N!} \sum_{\sigma\in S(N) } \sum_{l_4,\ldots,l_N=1}^{\vert X \vert}e_{c_{\sigma(1)}}\otimes \ldots \otimes e_{c_{\sigma(N-1)}}(l_{N-1})  \otimes e_{c_{\sigma(N)}}(l_N) \cr
&=& \frac{1}{N!} \sum_{\sigma\in S(N) } e_{c_{\sigma(1)}} \otimes e_{c_{\sigma(2)}} \otimes e_{c_{\sigma(3)}} \cr
&=& \frac{(N-3)!}{N!} \sum_{\substack{i,j,k=1\\ i>j>k}}^N e_{c_i} \otimes e_{c_j}\otimes e_{c_k}\cr
&=& \frac{1}{N(N-1)(N-2)} \bigg(\sum_{i,j,k=1}^N  e_{c_i} \otimes e_{c_j}\otimes e_{c_k}+ 2 \sum_{k=1}^N e_{c_k} \otimes e_{c_k} \otimes e_{c_k} \cr 
&\ & -\sum_{i,j=1}^N  e_{c_i}\otimes e_{c_j} \otimes e_{c_j} -\sum_{i,j=1}^N  e_{c_j}\otimes e_{c_i} \otimes e_{c_j} -\sum_{i,j=1}^N  e_{c_i}\otimes e_{c_i} \otimes e_{c_j} \bigg).
\end{eqnarray}
The second equality follows from the fact that there are $(N-3)!$ $\sigma\in S(N)$ such that
$e_{c_i}\otimes e_{c_j} \otimes e_{c_k} = e_{c_{\sigma(1)}} \otimes e_{c_{\sigma(2)}} \otimes
e_{c_{\sigma(3)}}$ for a fixed $e_{c_i}\otimes e_{c_j} \otimes e_{c_k} $. Letting
\begin{equation}
  \label{lambda def2}
  \lambda := \frac{1}{N}\sum_{i=1}^N e_{c_i},
\end{equation}
it follows that $\lambda\in \{0,1/N\}^{\vert X \vert}$, and $\lambda^T \mathbf{1}=1$, since each
$e_{c_i}$ has only an entry with value 1 and is 0 everywhere else, and $c_i\neq c_j$ for all $i\neq
j$. Moreover,
\begin{equation}
\label{3mar ext2}
\frac{1}{N}\sum_{i=1}^{N} e_{c_i}(l)e_{c_i}(l)e_{c_i}(l) = \frac{1}{N}\sum_{i=1}^{N}e_{c_i}(l)e_{c_i}(l) = \frac{1}{N}\sum_{i=1}^{N}e_{c_i}(l)= \lambda(l),\quad l=1,\ldots,\vert X \vert,
\end{equation}
and 
\begin{equation}
\label{3mar ext3}
\frac{1}{N}\sum_{i=1}^{N} e_{c_i}(l)e_{c_i}(j) = 0,\quad \frac{1}{N}\sum_{i=1}^{N} e_{c_i}(l)e_{c_i}(j)e_{c_i}(k) = 0\quad \text{if}\ l\neq j, \text{or}\ j\neq k,\ \text{or}\ k\neq l .
\end{equation}
Writing \eqref{3mar ext1} in terms of $\lambda$ using \eqref{3mar ext2} and \eqref{3mar ext3}, one
can marginalize $\tilde \Gamma_N$ to obtain 
\begin{eqnarray}
  \tilde \Gamma_3 &=& \bigg \{ \frac{1}{N(N-1)(N-2)} \big( N^3 \lambda\otimes \lambda \otimes\lambda
  + 2N \sum_{l=1}^{\vert X \vert} \lambda(l) e_l \otimes e_l \otimes e_l - N^2 \sum_{l=1}^{\vert X
    \vert} \lambda(l) \lambda \otimes e_l \otimes e_l \cr
  &\ & - N^2 \sum_{l=1}^{\vert X \vert} \lambda(l) e_l \otimes \lambda \otimes e_l - N^2
  \sum_{l=1}^{\vert X \vert} \lambda(l) e_l \otimes e_l\otimes \lambda \big )\ \vert \ \lambda\in
  \mathcal{B}_N(X) \bigg \}.
\end{eqnarray}
Since every physical $N$-representable 3-marginal comes from the marginalization of an element in $\tilde
\Pi_{N,\text{sym}}=\conv(\tilde \Gamma_N)$, the following statement holds.
\begin{proposition}
  The set of $N$-representable 3-marginals coming from the marginalization of $\tilde \Pi_{N,\text{sym}}$ is $\conv(\tilde \Gamma_3)$.
\end{proposition}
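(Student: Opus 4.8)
The plan is to reduce the statement to the elementary fact that a linear map commutes with the operation of taking a convex hull, using the description $\tilde\Pi_{N,\text{sym}} = \conv(\tilde\Gamma_N)$ already established. Write $M_3$ for the marginalization operator that sends an $N$-index tensor $\mu\in(\mathbb{R}^{\vert X\vert})^N$ to its $3$-marginal,
\begin{equation}
M_3(\mu)(i,j,k) = \sum_{l_4,\ldots,l_N=1}^{\vert X\vert} \mu(i,j,k,l_4,\ldots,l_N).
\end{equation}
Since every $\mu\in\tilde\Pi_{N,\text{sym}}$ is symmetric, this does not depend on which $N-3$ coordinates are summed out, so $M_3$ is a well-defined linear map. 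By definition the set of $N$-representable $3$-marginals coming from $\tilde\Pi_{N,\text{sym}}$ is exactly $M_3(\tilde\Pi_{N,\text{sym}})$, so it suffices to prove $M_3(\tilde\Pi_{N,\text{sym}}) = \conv(\tilde\Gamma_3)$.

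First I would record the general fact that for any linear map $f$ and any set $S$ one has $f(\conv S)=\conv(f(S))$: pushing a finite convex combination $\sum_i t_i s_i$ through $f$ yields the convex combination $\sum_i t_i f(s_i)$, and conversely every element of $\conv(f(S))$ is obtained this way. Applying this with $f=M_3$ and $S=\tilde\Gamma_N$, together with $\tilde\Pi_{N,\text{sym}}=\conv(\tilde\Gamma_N)$, gives
\begin{equation}
M_3(\tilde\Pi_{N,\text{sym}}) = M_3\big(\conv(\tilde\Gamma_N)\big) = \conv\big(M_3(\tilde\Gamma_N)\big).
\end{equation}
Because $X$ is finite, $\tilde\Gamma_N$ is a finite set, so the right-hand side is in fact a polytope.

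It remains to identify $M_3(\tilde\Gamma_N)$ with $\tilde\Gamma_3$, and this is precisely the computation already carried out in \eqref{3mar ext1}: applying $M_3$ to a generator $\frac{1}{N!}\sum_{\sigma\in S(N)} e_{c_{\sigma(1)}}\otimes\cdots\otimes e_{c_{\sigma(N)}}$ of $\tilde\Gamma_N$ with $c_1,\ldots,c_N$ pairwise distinct, collapsing the sum over $\sigma$, and rewriting the result in terms of $\lambda:=\frac1N\sum_{i=1}^N e_{c_i}$ by means of the identities \eqref{3mar ext2}--\eqref{3mar ext3}, produces exactly the element of $\tilde\Gamma_3$ indexed by that $\lambda$; and as $(c_1,\ldots,c_N)$ ranges over all tuples of pairwise distinct indices, $\lambda$ ranges over all of $\mathcal{B}_N(X)$, so $M_3(\tilde\Gamma_N)=\tilde\Gamma_3$. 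Combining with the previous display yields $M_3(\tilde\Pi_{N,\text{sym}})=\conv(\tilde\Gamma_3)$. There is essentially no hard step: the substantive work — computing the $3$-marginal of a symmetrized product state and re-expressing it through $\lambda$ — was already done in deriving $\tilde\Gamma_3$. The only points needing a word of care are that $M_3$ is well defined on symmetric tensors independently of the chosen coordinates (immediate from symmetry), and that the assignment $\lambda\mapsto$ (its element of $\tilde\Gamma_3$) is onto $\tilde\Gamma_3$, which holds because distinct tuples differing by a permutation give the same $\lambda$ and the same symmetrized state, and every $\lambda\in\mathcal{B}_N(X)$ has exactly $N$ nonzero entries each equal to $1/N$, hence arises from such a tuple.
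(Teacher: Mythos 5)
Your proof is correct and follows essentially the same route as the paper: the paper establishes the proposition by marginalizing the extreme points of $\tilde \Pi_{N,\text{sym}}=\conv(\tilde \Gamma_N)$ (the computation in \eqref{3mar ext1}--\eqref{3mar ext3}) and implicitly invoking the linearity of marginalization, which is exactly the fact you make explicit with $M_3(\conv(\tilde\Gamma_N))=\conv(M_3(\tilde\Gamma_N))$. Your version simply spells out the commutation of a linear map with convex hulls and the identification $M_3(\tilde\Gamma_N)=\tilde\Gamma_3$, which the paper leaves implicit.
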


With this proposition, in order to minimize \eqref{3mar MM} one can solve
\begin{eqnarray}
  \label{3mar MM2}
  \min_{\substack{\kappa \in \mathbb{R}^{\vert X \vert \times \vert X \vert  \times \vert X \vert },\\ \lambda\in \mathbb{R}^{\vert X \vert }} } 
  &\ & g(\lambda)+  \frac{N(N-1)(N-2)}{6} \sum_{i,j,k=1}^{\vert X \vert} \bar C(i,j,k)\kappa(i,j,k)\\
  \text{s.t.}&\ & \kappa\in \conv(\tilde \Gamma_3)\cr
  &\ & \lambda(i) = \sum_{j,k=1}^{\vert X \vert} \kappa(i,j,k),\quad i=1,\ldots,\vert X \vert\cr
  &\ & \mathcal{A}(\lambda) = b.\nonumber
\end{eqnarray}

\subsection{Convex relaxation to the 3-marginal problem}
The variable $\kappa$ in \eqref{3mar MM2} takes the form $\kappa = \sum_{i=1}^{m} a(i)
\kappa_i,\kappa_i \in \tilde \Gamma_3$ with $a \geq 0$ and $a^T \mathbf{1} =1$. Therefore, in order
to derive a convex relaxation to \eqref{3mar MM2}, one seeks a convex set that contains all the
elements in $\tilde \Gamma_3$. Such a set will certainly contain $\kappa = \sum_{i=1}^{m}
a(i)\kappa_i$, which is a convex combination of $\kappa_i \in \tilde \Gamma_3,i=1,\ldots,m$. For
this purpose, let
\begin{equation}
  \Theta := \lambda \otimes \lambda \otimes \lambda,\quad \lambda \in \mathcal{B}_N(X).
\end{equation}
Since $\lambda^T \mathbf{1} = 1$,
\begin{equation}
  \lambda \lambda^T = \sum_{k=1}^{\vert X \vert } \Theta(:,:,k),\quad \lambda  = \sum_{j,k=1}^{\vert X \vert } \Theta(:,j,k).
\end{equation}
Then in terms of $\Theta$, an extreme point $\kappa\in\tilde \Gamma_3$ is
\begin{multline}
\kappa = \phi(\Theta):=\frac{1}{N(N-1)(N-2)} \bigg(N^3\Theta + 2N \sum_{l=1}^{\vert X \vert }\big(\sum_{j,k=1}^{\vert X \vert } \Theta(l,j,k)\big) e_l \otimes e_l \otimes e_l\cr
  -N^2  \sum_{l=1}^{\vert X \vert } \big(\sum_{k=1}^{\vert X \vert } \Theta(l,:,k)\big)\otimes e_l \otimes e_l  -N^2  \sum_{l=1}^{\vert X \vert }e_l \otimes \big(\sum_{k=1}^{\vert X \vert } \Theta(l,:,k)\big)\otimes e_l \cr
 -N^2  \sum_{l=1}^{\vert X \vert }e_l \otimes e_l \otimes \big(\sum_{k=1}^{\vert X \vert } \Theta(l,:,k)\big) \bigg).
\end{multline}
Next, we impose some necessary conditions on $\Theta$ in a convex manner so that $\Theta$ comes from
the tensor product of the quantized marginals $\lambda$. Clearly, the symmetry property implies
\begin{equation}
\label{symm con}
\Theta(i,j,k) = \Theta(k,i,j) = \Theta(j,k,i) =  \Theta(j,i,k) = \Theta(k,j,i) = \Theta(i,k,j)
\end{equation}
Since $\lambda\in \{0,1/N\}^{\vert X \vert}$,
\begin{equation}
\label{quantized 2}
\lambda(i) \lambda(i) \lambda(j) = \lambda(i)\lambda(j)/N \Rightarrow \Theta(i,i,j) = \frac{1}{N}\sum_{k=1}^{\vert X \vert } \Theta(i,j,k), \quad \forall i,j = 1,\ldots,\vert X \vert.
\end{equation}
Then the constraint that $\lambda^T \mathbf{1} = 1$ gives
\begin{equation}
\label{normalize}
\sum_{i=1}^{\vert X \vert }  \lambda(i) = 1 \Rightarrow \sum_{i,j,k=1}^{\vert X \vert } \Theta(i,j,k) = 1.
\end{equation}
We also have the conic constraints
\begin{equation}
\label{slice psd}
\Theta(:,:,i) = \lambda\lambda^T \lambda(i) \succeq 0,\quad \forall i=1,\ldots,\vert X \vert.
\end{equation}
and 
\begin{equation}
\label{positivity}
\Theta\geq 0.
\end{equation}
Combining \eqref{symm con},\eqref{quantized 2},\eqref{normalize},\eqref{slice
  psd} and \eqref{positivity} leads to the following optimization problem over $\Theta$.
\begin{eqnarray}
  \label{SDPCoulomb2}
  \min_{\Theta,\kappa \in \mathbb{R}^{\vert X \vert \times \vert X \vert  \times \vert X \vert }}  &\ & g(\sum_{j,k=1}^{\vert X \vert} \Theta(:,j,k))+  \frac{N(N-1)(N-2)}{6} \sum_{i,j,k=1}^{\vert X \vert} \bar C(i,j,k)\kappa(i,j,k)\quad (\text{SDP-Coulomb2})\\
\text{s.t.}&\ & \kappa = \phi(\Theta)\cr
&\ &\Theta\ \text{is symmetric}\cr
&\ &  \Theta(i,i,j) = \frac{1}{N}\sum_{k=1}^{\vert X \vert } \Theta(i,j,k), \quad \forall i,j = 1,\ldots,\vert X \vert\cr
&\ & \sum_{i,j,k=1}^{\vert X \vert } \Theta(i,j,k) = 1\cr
&\ & \Theta(:,:,i) \succeq 0\ \forall i=1,\ldots,\vert X\vert,  \Theta \geq 0 \cr
&\ & \mathcal{A}(\sum_{j,k=1}^{\vert X \vert} \Theta(:,j,k)) = b.\nonumber
\end{eqnarray}

\subsection{A remark on Lassere's hierarchy}
It is possible to use the Lassere hierarchy (or sum-of-squares hierarchy) \cite{anjos2012introduction, blekherman2012semidefinite} to further tighten the convex relaxation. When applying this method to our problem, the task of determining some power of the quantized 1-marginal $\lambda\in \mathcal{B}_N(X)$ (for example the problem of determining the 2 and 3-marginals), is reformulated as a moment determination problem. More precisely, instead of working with the monomials $\{\lambda^\alpha\}_\alpha$ where $\alpha \in \mathbb{N}^{\vert X \vert}$ is a multi-index and $\mathbb{N}$ is the set of natural numbers, one performs a change of variables according to
\begin{equation}
[\lambda^\alpha \lambda^\beta]_{\alpha,\beta} \Rightarrow [\mathbb{E}(\lambda^\alpha \lambda^\beta)]_{\alpha,\beta}.
\end{equation} 
The optimization variable, the matrix $[\mathbb{E}(\lambda^\alpha \lambda^\beta)]_{\alpha,\beta}$, has size ${\ p+\vert X \vert \choose p}$ for each dimension if we consider the monomials $\lambda^\alpha$'s and $\lambda^\beta$'s up to degree $p$.  Then, an equality constraint $h(\lambda)=0$ ($h$ is a polynomial) is changed according to
\begin{equation}
h(\lambda) = 0 \Rightarrow \mathbb{E}(h(\lambda) \lambda^\alpha) = 0\ \forall \alpha,
\end{equation} 
and an inequality constraint $q(\lambda)=0$ ($q$ is a polynomial) is changed according to
\begin{equation}
q(\lambda) \geq 0 \Rightarrow \mathbb{E}(q(\lambda) s(\lambda)^2) \geq 0 \ \forall s(\lambda)
\end{equation} 
where $s$ is some polynomial. The inequality constraints leads to a positive semidefinite constraint. For example the constraint $\lambda \geq 0$ simply gives
\begin{equation}
v^T \left( [\mathbb{E}(\lambda^\alpha \lambda^\beta)]_{\alpha,\beta} \right) v \geq 0, \quad \forall v \ \text{with size}\  {\ p+\vert X \vert \choose p},
\end{equation}
if we consider the monomials $\lambda^\alpha$'s and $\lambda^\beta$'s up to degree $p$. As can be
seen, when choosing $p\geq2$, we already face with $\vert X \vert^4$ variables. Therefore, we pursue
a cheaper alternative.
\section{Rounding}
\label{section: implementation}
The previous sections describe several convex relaxation approaches for solving the multi-marginal
transport problem. The general philosophy is to enlarge the domain of optimization, therefore
obtaining a lower bound for the global minimum. To obtain an upper bound for the global minimum, we
need to project the solution back into the unrelaxed domain ($\conv(\tilde \Gamma_2$) or
$\conv(\tilde \Gamma_3$)). We consider two cases of practical importance:
\begin{enumerate}
\item When the linear constraint $\mathcal{A}(\lambda) = b$ is not present in \eqref{symmetric MM}.
\item When $\mathcal{A}(\lambda) = b \rightarrow \lambda = \rho$, for example
  when solving the multimarginal-optimal transport problem \eqref{SCE OT}.
\end{enumerate}
Section \ref{section: rounding 1} addresses the first case. Here, we devise a scheme to round the
solution from SDP-Coulomb to the set of extreme points $\tilde \Gamma_2$ for the set of
$N$-representable 2-marginals. In Section \ref{section: rounding 2}, we deal with the second case
with the marginal constraint. For this case, it is difficult to work with SDP-Coulomb to obtain a
rounded solution in $\tilde \Gamma_2$. Therefore, we discuss how we can use SDP-Coulomb2 for such a
purpose.

\subsection{Without the linear constraint $\mathcal{A}(\lambda) = b$}
\label{section: rounding 1}
In the special case where the constraint $\mathcal{A}(\Lambda \mathbf{1}) = b$ is absent and
$g(\cdot)$ is a linear functional, we simply minimize a linear functional of $\Lambda$ in
SDP-Coulomb. In principle, if the domain of SDP-Coulomb (without $\mathcal{A}(\Lambda \mathbf{1}) =
b$) is close to the set of $N$-representable 2-marginals with zero diagonal ($\conv(\tilde
\Gamma_2)$) in Corollary \ref{corollary: extreme point}), then SDP-Coulomb should return a solution
$\Lambda^* \approx \lambda^* {\lambda^*}^T $ where $\lambda^*\in \mathcal{B}_N(X)$. This is because
the extreme points of $\conv(\tilde \Gamma_2)$ is $\tilde \Gamma_2$ (Corollary \ref{corollary:
  extreme point}), and generically, the optimizer of a linear functional over a convex set is an
extreme point of the set. We therefore propose a rounding procedure in Alg. \ref{algorithm:
  rounding}. If SDP-Coulomb returns a solution $\Lambda^*$ where the entries on the diagonal of
$\Lambda^*$ are not exactly $1/N^2$ or $0$, letting the index of the largest entry of
$\diag(\Lambda^*)$ be $i_\text{max}$, we add a linear constraint $\diag(\Lambda)(i_\text{max}) =
1/N^2$ to SDP-Coulomb. This step is repeated until a rank-1 $\Lambda^*$ is obtained. This is
summarized in Alg. \ref{algorithm: rounding}.
\begin{algorithm}
  \caption{Rounding in the absence of the linear constraint $\mathcal{A}(\lambda)=b$}\label{algorithm: rounding}
  \begin{algorithmic}[1]
	\Procedure{Rounding}{}
	\State $\Lambda^*\gets$ Solution to SDP-Coulomb.
	\State  $\mathcal{I} \gets \{\emptyset\}$, $R\gets I$ 
	\While {$\text{rank}(\Lambda^*)>1$}
	\State $i_\text{max} \leftarrow$ index of the largest element in $R\diag(\Lambda^*)$.
	\State $\mathcal{I} \gets \mathcal{I}\cup i_\text{max}$, $\mathcal{I}^c \gets \{1,\ldots,\vert X \vert\}\setminus \mathcal{I}$.
	\State $R\gets$ $I(\mathcal{I}^c,:)$.
	\State $\Lambda^* \gets$ Solution to SDP-Coulomb with the extra constraint $\diag(\Lambda)_\mathcal{I} = 1/N^2$.
	\EndWhile
	\State \Return  $\Lambda^*$.
	\EndProcedure
  \end{algorithmic}
\end{algorithm}
We remark that this procedure is crucial when there are degenerate solutions, giving a high rank
solution in SDP-Coulomb.

\subsection{With the marginal constraint $\lambda = \rho$}
\label{section: rounding 2}
When having the constraint $\Lambda \mathbf{1} = \rho$ in SDP-Coulomb, we cannot pursue the same
strategy as in Section \ref{section: rounding 1} to round the solution. When there exists a marginal
constraint, we expect the solution to \eqref{2mar MM} to be a convex combination of the extreme
points from $\tilde \Gamma_2$, implying SDP-Coulomb returns solution as $\Lambda^* \approx
\sum_{i=1}^{m} a^*(i)\lambda^*_i {\lambda^*_i}^T, {a^*}^T \mathbf{1} = 1, a^*\geq 0$. However, in
order to round, one has to first disentangle each $\lambda_i^*$ from such a convex
combination. Since $\lambda_i^*$'s are not orthogonal to each other,
it is not obvious how one can use matrix factorization techniques such as an eigendecomposition to
obtain the $\lambda_i^*$'s from $\Lambda^*$. To this end, we resort to using SDP-Coulomb2 to obtain
each $\lambda_i^*$. Since in SDP-Coulomb2, we expect to have the solution $\Theta^* \approx
\sum_{i=1}^{m} a^*(i) \lambda_i^* \otimes \lambda_i^* \otimes \lambda_i^*, \lambda_i^*\in
\mathcal{B}_N(X)$ (as we expect the solution to approximately lie in $\conv(\tilde \Gamma_3)$), we
resort to using a CP-tensor decomposition \cite{kolda2009tensor} to obtain each individual $\lambda_i^*$
approximately.

In order to use a CP-decomposition, one needs to have an idea of what $m$ is. The following
discussion demonstrates that $m=|X|$. We first look at the set of the physical symmetric probability measures on $X^N$ that have the marginal being $\rho$:
\begin{eqnarray}
\tilde \Pi_{N,\text{sym}}(\rho) &=& \{ \mu\in \tilde \Pi_{N,\text{sym}} \ \vert\  \sum_{i_2,\ldots,i_N=1}^{\vert X \vert}\mu(:,i_2\ldots,i_N) = \rho\}\cr
 &=& \conv(\tilde \Gamma_N)\cap \bigg\{ \mu\in ({\mathbb{R}^{\vert X \vert}})^N \ \vert  \sum_{i_2,\ldots,i_N=1}^{\vert X \vert} \mu(i_1,i_2\ldots,i_N) = \rho(i_1),\ i_1 = 1,\ldots,\vert X \vert -1 \bigg\}.\label{Nmar ext}
\end{eqnarray}
Notice that the marginal constraint in \eqref{Nmar ext} is only enforced for $\vert X \vert -1$
sites. This is because for $\mu \in \conv(\tilde \Gamma_N)$,
\begin{equation}
\sum_{i_2,\ldots,i_N=1}^{\vert X \vert} \mu(\vert X \vert,i_2\ldots,i_N)
\end{equation}
is completely determined by 
\begin{equation}
\sum_{i_2,\ldots,i_N=1}^{\vert X \vert} \mu(i_1,i_2\ldots,i_N),\quad i_1 = 1,\ldots,\vert X \vert-1
\end{equation}
via
\begin{equation}
\sum_{i_2,\ldots,i_N=1}^{\vert X \vert} \mu(\vert X \vert,i_2\ldots,i_N) = 1 - \sum_{i_1=1}^{\vert X
  \vert -1} \sum_{i_2,\ldots,i_N=1}^{\vert X \vert} \mu(i_1,i_2\ldots,i_N).
\end{equation}
We now appeal to the results in \cite{dubins1962extreme} to see what $m$ is. The theorem in
\cite{dubins1962extreme} implies that for a closed and bounded convex set $\mathcal{K}$, an extreme point of
$\mathcal{K}\cap H_1 \cap \cdots H_n$ where $H_1,\ldots,H_n$ are $n$ hyperplanes can be represented as $n+1$
convex combination of the extreme points of $\mathcal{K}$. Since $\tilde \Pi_{N,\text{sym}}(\rho)$ in
\eqref{Nmar ext} is the intersection of $\conv(\tilde \Gamma_N)$ with $\vert X \vert -1$
hyperplanes, it follows that for an extreme point $\mu \in \tilde \Pi_{N,\text{sym}}(\rho)$, $\mu$ is the convex combination of $\vert X \vert$ elements in $\tilde \Gamma_N$. After a marginalization, it follows that a physical $N$-representable 3-marginal that satisfies the marginal constraint is a convex combination of $\vert X \vert$ elements of $\tilde \Gamma_3$, therefore $m=\vert X \vert$.

As $\Theta^* \approx \sum_{i=1}^{\vert X\vert} a^*(i) \lambda_i^* \otimes \lambda_i^* \otimes
\lambda_i^*$, if the approximation $\approx$ holds with an $=$ sign, and if $\lambda_1^*,\ldots,\lambda^*_{\vert X \vert}$ are linearly independent, then $\Theta^*$ has a unique CP tensor decomposition, up to ordering and magnitude of
$\lambda^*_i$'s. This can be seen in Section \ref{section: jenrich} where Jenrich's algorithm
provides an explicit construction of the $\lambda_i^*$'s. We note that although the assumption of linearly
independent $\lambda_1^*,\ldots,\lambda^*_{\vert X \vert}$ is required for the success of Jenrich's
algorithm, it is not a necessary condition to ensure the uniqueness of the CP-decomposition (see for
example the theorem of Kruskal \cite{kruskal1977three}). In the situation where the linearly
independence assumption is violated, one may use a different algorithm such as the alternating
least-squares (ALS) for recovering the tensor components. Therefore, our rounding algorithm has
three phases. We first use Jenrich's algorithm to obtain an initialization for
$\lambda_i^*,i=1,\ldots,\vert X \vert$. Then a procedure based on ALS is used to refine the solution
from Jenrich's algorithm and also enlarge the set $\{\lambda_i^*\}_{i=1}^{\vert X \vert }$ to
$\{\lambda_i^*\}_{i=1}^p$. Lastly, we solve a regression problem to determine the convex combination
of $\{\lambda_i^*\}_{i=1}^p$ that approximate $\Theta^*$ while satisfying the marginal
constraint. The algorithm is summarized in Alg. \ref{algorithm: round2}.
\begin{algorithm}
  \caption{Algorithm for rounding  in the presence of the marginal constraint}\label{algorithm: round2}
  \begin{algorithmic}[1]
	\Procedure{Rounding2}{$\delta,\rho$}
	\State $\Theta^* \gets$ Solution to SDP-Coulomb2. 
	\State $\{\lambda_i^*\}_{i=1}^{\vert X \vert} \gets$ JENRICH($\Theta^*$) (Section \ref{section: jenrich}).
	\State $\{\lambda_i^*\}_{i=1}^p \gets$ ALS($\Theta^*,\{\lambda_i^*\}_{i=1}^{\vert X \vert},\delta$) (Section \ref{section: als}).
	\State $a^* \gets \underset{a\in\mathbb{R}^p}{\text{argmin}}  \|\Theta^*- \sum_{i=1}^p a(i) \lambda_i^* \otimes \lambda_i^* \otimes \lambda_i^* \|_F^2$ \ \ s.t. $a\geq 0$, $a^T \mathbf{1} =1 , \sum_{i=1}^p a(i) \lambda_i^* =\rho$.
	\State $\Theta^* \gets \sum_{i=1}^p a^*(i) \lambda_i^* \otimes \lambda_i^* \otimes \lambda_i^*$, 
	\State \Return $\Theta^*$
	\EndProcedure
  \end{algorithmic}
\end{algorithm}

\subsubsection{Jenrich's algorithm} \label{section: jenrich}
In this section, we provide the details for Jenrich's algorithm in Alg. \ref{algorithm: jenrich} for
the sake of completeness.
\begin{algorithm}
	\caption{Jenrich algorithm}\label{algorithm: jenrich}
	\begin{algorithmic}[1]
		\Procedure{Jenrich}{$\Theta$} 
		\State  Get $w_1,w_2 \in \mathbb{R}^{\vert X \vert}$,  $w_1(i),w_2(i) \sim \text{uniform}[0,1],i = 1,\ldots \vert X \vert$.
		\State $W_1 \gets \sum_{k=1}^{\vert X \vert} w_1(k) \Theta(:,:,k)$, $W_2 \gets \sum_{k=1}^{\vert X \vert} w_2(k) \Theta(:,:,k)$.
		\State Eigendecompose $W_1 W_2^{\dagger} = U \Sigma U^\dagger$, where $\Sigma$ is a diagonal matrix.
		\State $\lambda_i \gets U(:,i), i=1,\ldots,\vert X \vert$.
		\State $\lambda_i \gets \frac{\lambda_i}{\sqrt{N}\| \lambda_i\|_2}, i=1,\ldots,\vert X \vert$.
		\State \Return $\{\lambda\}_{i=1}^{\vert X \vert}$.
		\EndProcedure
	\end{algorithmic}
\end{algorithm}
The key idea of Alg. \ref{algorithm: jenrich} is that, if $\Theta = \sum_{i=1}^{\vert X \vert}
a(i) \lambda_i \otimes \lambda_i \otimes \lambda_i$, then
\begin{equation}
W_1 = \sum_{i=1}^{\vert X \vert} (a(i) w_1^T \lambda_i) \lambda_i \lambda_i^T,\quad W_2 = \sum_{i=1}^{\vert X \vert} (a(i) w_2^T \lambda_i) \lambda_i \lambda_i^T.
\end{equation}
Thus 
\begin{equation}
\label{Jenrich eig}
W_1 W_2^\dagger = U S U^\dagger,\quad U =[ \lambda_1 \cdots  \lambda_{\vert X \vert} ],\quad \Sigma = \diag^*\bigg(\bigg[\frac{a(1) w_1^T \lambda_1}{a(1) w_2^T \lambda_1},\ldots,\frac{a(\vert X \vert) w_1^T \lambda_{\vert X \vert}}{a(\vert X \vert) w_2^T \lambda_{\vert X \vert}}\bigg]\bigg).
\end{equation}
So the eigenvectors of $W_1 W_2^\dagger$ give $\lambda_1\ldots,\lambda_{\vert X \vert}$. The last
step in Alg. \ref{algorithm: jenrich} is a normalization step to ensure $\|\lambda_i\|=1/\sqrt{N}$
for all $i$, since in principle $\lambda_i\in \mathcal{B}_N(X)$. As we see, if in \eqref{Jenrich
  eig} $\lambda_1,\ldots,\lambda_{\vert X \vert}$ are linearly independent, Jenrich's algorithm
gives a unique decomposition since $\text{diag}(\Sigma)$ is non-degenerate generically (except for
the entries correspond to $a(i)=0$).

\subsubsection{Alternating least-squares} \label{section: als}
To further refine the solution from Jenrich's algorithm to approximate a given tensor $\Theta$, we
propose to use a variant of the ALS that is similar to a projected gradient descent.  Ideally, if
$\Theta = \sum_{i=1}^{\vert X \vert} a(i) \lambda_i\otimes \lambda_i \otimes \lambda_i$, one can try
to solve
\begin{eqnarray}
\label{ALS}
\min_{\substack{a\in \mathbb{R}^{\vert X \vert},\\ P,Q,R\in \mathbb{R}^{\vert X \vert\times \vert X \vert}}}&\ & \| \sum_{i=1}^{\vert X \vert} P(:,i) \otimes Q(:,i) \otimes R(:,i) - \Theta \|_F^2\\
\text{s.t.}&\ & Q=R, P =R\diag^*(a)\cr
&\ &a\geq 0,\ a^T\mathbf{1} = 1\cr
&\ & R(:,i) \in \mathcal{B}_N(X).\nonumber
\end{eqnarray}
using a local optimization algorithm and
identify the $\lambda_i$'s with the $R(:,i)$'s, provided Jenrich's algorithm gives a good
initialization. There is however a caveat. Although $\sum_{i=1}^{\vert X \vert} P(:,i) \otimes
Q(:,i) \otimes R(:,i)$ provides an approximation to the 3-marginal $\Theta$, $\sum_{k,j=1}^{\vert X
  \vert}\sum_{i=1}^{\vert X \vert} P(:,i) \otimes Q(k,i) \otimes R(j,i) \neq \rho$ in general, hence
the marginal constraint can be violated. To deal with such an issue, we want to identify a set of
$\lambda_i$'s in $\mathcal{B}_N(X)$, $\{\lambda_i\}_{i=1}^p$, where $p>\vert X \vert$.  With a more
generous selection of the $\lambda_i$'s, some convex combination of $\{\lambda_i\}_{i=1}^p$ should give
the correct marginal while approximating $\Theta$ from SDP-Coulomb2 \eqref{SDPCoulomb2}.

To this end, the following problem with a less stringent constraint is solved instead:
\begin{eqnarray}
\label{nonsymmetric ALS}
\min_{P,Q,R\in \mathbb{R}^{\vert X \vert\times \vert X \vert}}&\ & \| \sum_{i=1}^{\vert X \vert} P(:,i) \otimes Q(:,i) \otimes R(:,i) - \Theta \|_F^2\\
\text{s.t.}&\ & \|Q(:,i)\|_2 = 1/\sqrt{N}\cr
&\ &N\ \text{entries of}\ \vert R(:,i) \vert \ \text{are}\ 1/N\ i=1,\ldots,\vert X \vert\nonumber.
\end{eqnarray}
Notice that each of the $R(:,i)$'s is not required to have only $N$ nonzero entries, unlike in
\eqref{ALS} where $R(:,i)$'s belong to $\mathcal{B}_N(X)$. To solve \eqref{nonsymmetric ALS}, we
use an ALS procedure detailed in Alg. \ref{algorithm: als}. The outer-loop of this procedure
controls the number of the entries of $R(:,i)$ that have magnitude $1/N$. At every step of
Alg. \ref{algorithm: als}, each column of $Q$ is normalized to $1/ \sqrt{N}$ after solving the
least-squares concerning $Q$. To enforce the constraint on $R(:,i)$ in \eqref{nonsymmetric ALS},
after solving the least-squares concerning $R$, for each $R(:,i)$, $k$ entries with the largest
magnitude are picked out and have their magnitude being set to $1/N$. When the iteration converges,
we then enforce $k+1$ entries of each $R(:,i), i=1,\ldots,\vert X \vert$ to have magnitude $1/N$ in
the ALS. These steps are repeated until $k=N$. We expect each $R(:,i), i=1,\ldots,\vert X \vert$ to
have $N$ or slightly greater than $N$ entries that are large in magnitude. Using the large magnitude
entries in each column of $R$, we exhaustively enumerate the candidate $\{\lambda_i\}_{i=1}^p$ where
$\lambda_i \in \mathcal{B}_N(X)$. The number $p$ is controlled via the parameter $\delta$.
\begin{algorithm}
  \caption{Modified alternating least-squares}\label{algorithm: als}
  \begin{algorithmic}[1]
	\Procedure{ALS}{$\Theta,\{\lambda_i\}_{i=1}^{\vert X \vert}$,$\delta$} 
	\State  Initialize $Q= [\lambda_1,\ldots,\lambda_{\vert X \vert}]$, $R = [\lambda_1,\ldots,\lambda_{\vert X \vert}]$.		
	\For{$k$ from 1 to $N$}
	\While{not converge}
	\State $P \gets \arg\min_{\tilde P\in \mathbb{R}^{\vert X \vert\times \vert X \vert}} \| \sum_{i=1}^{\vert X \vert} \tilde P(:,i) \otimes Q(:,i)  \otimes R(:,i)  - \Theta \|_F^2 $.
	\State $Q \gets \arg\min_{\tilde Q\in \mathbb{R}^{\vert X \vert\times \vert X \vert}} \| \sum_{i=1}^{\vert X \vert} P(:,i) \otimes \tilde Q(:,i)  \otimes R(:,i)  - \Theta \|_F^2 $.
	\State $Q(:,i) \gets \frac{Q(:,i)}{\sqrt{N} \|Q(:,i)\|_2},\ i=1,\ldots,N$.
	\State $R \gets \arg\min_{\tilde R\in \mathbb{R}^{\vert X \vert\times \vert X \vert}} \| \sum_{i=1}^{\vert X \vert} P(:,i) \otimes  Q(:,i)  \otimes\tilde R(:,i) - \Theta \|_F^2 $.
	\State Set $k$ entries of $R(:,i), i=1,\ldots,\vert X \vert$ with the largest magnitude to have magnitude $1/N$.
	\EndWhile
	\EndFor
	\For{$i$ from 1 to $\vert X \vert$}
	\State $\mathcal{I}_i \gets \{j\ \vert \  \vert C(j,i)\vert > \delta/N   \}$.
	\State Form $\xi_l^{(i)} \in \mathcal{B}_N(X), l=1,\ldots, {\vert \mathcal{I}_i \vert \choose N}$. The non-zero entries of $\xi_l^{(i)}$ for each $l$ are indexed by 
	\State a subset of $\mathcal{I}_i $ with $N$ elements.
	\State $p_i\gets {\vert \mathcal{I}_i \vert \choose N}$.
	\EndFor 
	\State $\{\lambda_i\}_{i=1}^p \gets \cup_{i=1}^{\vert X \vert} \{\xi_l^{(i)}\}_{l=1}^{p_i} $
	\State \Return $\{\lambda_i\}_{i=1}^p$.
	\EndProcedure
  \end{algorithmic}
\end{algorithm}

\section{Numerical simulations}\label{section: numerical}
In this section, we demonstrate the effectiveness of our approach using a few numerical examples. The energy is computed using
\begin{equation}
  E(\gamma) = \sum_{i,j=1}^{\vert X \vert} \Tr( C(i,j) \gamma(i,j)),
\end{equation}
where $\gamma$ is the 2-marginal, obtained either via SDP-Coulomb or SDP-Coulomb2 (or their rounded
versions). We denote the solution to SDP-Coulomb and SDP-Coulomb2 $\gamma_1^{-},\gamma_2^{-}$, and
their rounded solutions $\gamma_1^{+},\gamma_2^{+}$. The superscripts are used to indicate whether
we are using the solutions for the purpose of obtaining a lower bound or an upper bound for the
energy. We always choose $C$ such that $C(x,y) = \frac{1}{\| x - y \|_2}, x,y\in X$,. In all cases,
we choose a box $[-2,2]^d$ where $d$ is the dimension of the space where the electrons reside. A
uniform discretization is then applied to $[-2,2]^d$ to get the discrete domain $X$. We use
\begin{equation}
{E_\text{gap}}_i = \frac{E(\gamma_i^+) - E(\gamma_i^-) }{E(\gamma_i^-)},\quad i=1,2
\end{equation}
to provide an idea on how close we are to the true energy. SDP-Coulomb and SDP-Coulomb2 are
implemented using the large scale SDP solver SDPNAL+\cite{yang2015sdpnal}.

\subsection{Optimizing a linear functional over the 2-marginal}
In this section, we let $g(\lambda)$ in \eqref{symmetric MM} be an arbitrary linear functional $c^T
\lambda$. This can be seen as an external potential $v_\text{ext}$ in \eqref{total
  minimization}. Then SDP-Coulomb is solved to obtain the 2-marginals. Since one can already devise
a rounding scheme (Section \ref{section: rounding 1}) based on the solution of SDP-Coulomb, we only
present the energy gap derived from $\gamma_1^{-}$ and $\gamma_1^{+}$. Unlike SDP-Coulomb2,
SDP-Coulomb only involves a matrix with size $\vert X \vert \times \vert X \vert$, therefore we can
apply it to grids with larger size. The model for the vector $c$ considered is
\begin{equation}
c =\sigma (\min_{i,j}C(i,j)) \mathcal{N}(0,1).
\end{equation}
In Table \ref{table: egap2D} and \ref{table: egap3D}, we present ${E_\text{gap}}_1$ for $d=2,3$, with $N=5,9,13$. When $d=2$, we use a grid with size $\vert X \vert =20^2$. When $d=3$, we let $\vert X \vert = 9^3$. 

\begin{table}[ht]
	\centering 
	\begin{tabular}{c c c c } 
		\hline\hline 
		& $\sigma=0$ &$\sigma=0.25$  &  $\sigma=0.5$\\ [0.5ex] %
		\hline\hline 
		$n=5$  & 3.3e-03 & 7.6e-03 & 1.3e-02 \\
		$n=9$ & 3.8e-03 & 3.0e-03  &  3.6e-03\\ 
		$n=13$ & -2.0e-05 & 3.1e-03 & 3.4e-03  \\[1ex] 
		\hline 
	\end{tabular}
	\caption{${E_\text{gap}}_1$ for electrons in 2D. Here $\vert X \vert = 20^d$, $d=2$. The energy
      gap is averaged over 12 realizations of $c$. The negative gap between the upper and lower
      bounds when $\sigma=0, N=13$ is due to the accuracy limitation of the optimization package.
    }\label{table: egap2D}
\end{table}

\begin{table}[ht]
	\centering 
	\begin{tabular}{c c c c } 
		\hline\hline 
		& $\sigma=0$ &$\sigma=0.25$  &  $\sigma=0.5$\\ [0.5ex] %
		\hline\hline 
		$n=5$  & 3.7e-02&8.1e-03 & 5e-03   \\
		$n=9$ & 7.9e-03&5.1e-03 & 3.5e-03  \\ 
		$n=13$ & 3.2e-03&2.8e-03 & 3.1e-03  \\[1ex] 
		\hline 
	\end{tabular}
	\caption{${E_\text{gap}}_1$ for electrons in 3D. Here $\vert X \vert = 9^d$, $d=3$. The energy
      gap is averaged over 12 realizations of $c$. }\label{table: egap3D}
\end{table}

\subsection{Multimarginal Optimal Transport}
In this section, we present numerical results for different instances of Problem \eqref{SCE OT}. Both SDP-Coulomb and SDP-Coulomb2 are tested. Due to the size of the
variable in SDP-Coulomb2, we can only afford a smaller grid size. The point of the simulation is to
demonstrate how an upper bound of the energy can be extracted using SDP-Coulomb2, through method
presented in Section \ref{section: rounding 2}.

In the case of 1D, we use three different marginals
\begin{equation}
\rho_1(x) \propto 1,\ \quad  \rho_2(x) \propto \exp(-x^2/\sqrt{\pi}),\quad \rho_3(x) \propto \sin(4x)+1.5.
\end{equation}
where $\rho_1,\rho_2,\rho_3$ are appropriately normalized. Using the combination of SDP-Coulomb2 and Alg. \ref{algorithm: round2}, an upper-bound can
be obtained. We present the results with $\vert X \vert = 64$ and $N=8$ in Fig. \ref{figure: unif64},
\ref{figure: gaussian64} and \ref{figure: sine64}. In all examples, we obtained an energy gap from
the order of 1e-04 to 1e-02. The running times for SDP-Coulomb and SDP-Coulomb2 are about 7s and
249s on average. In general, we observe a fuzzier 2-marginal in SDP-Coulomb, especially when the
marginal is $\rho_3$. We note that the marginals chosen are bounded away from 0. This is because
if there are sites where the marginal is close to zero, due to the approximation error of
SDP-Coulomb2, $\Theta^*$ may be inaccurate on these sites, making rounding difficult. For the 2D case, we tested it on a Gaussian distribution
\begin{equation}
\rho_4(x,y) \propto \exp(-(x^2+y^2)/\sqrt{12\pi})
\end{equation}
with $\vert X\vert =10^2$ and $N=6$. The running time for SDP-Coulomb and SDP-Coulomb2 are 4.7s
and 731s respectively. Again, the difference between the quality of the solutions from SDP-Coulomb
and SDP-Coulomb2 is rather small.

\begin{figure}[!ht]
	\centering
	\subfloat[SDP-Coulomb.]{\includegraphics[width=0.47\columnwidth]{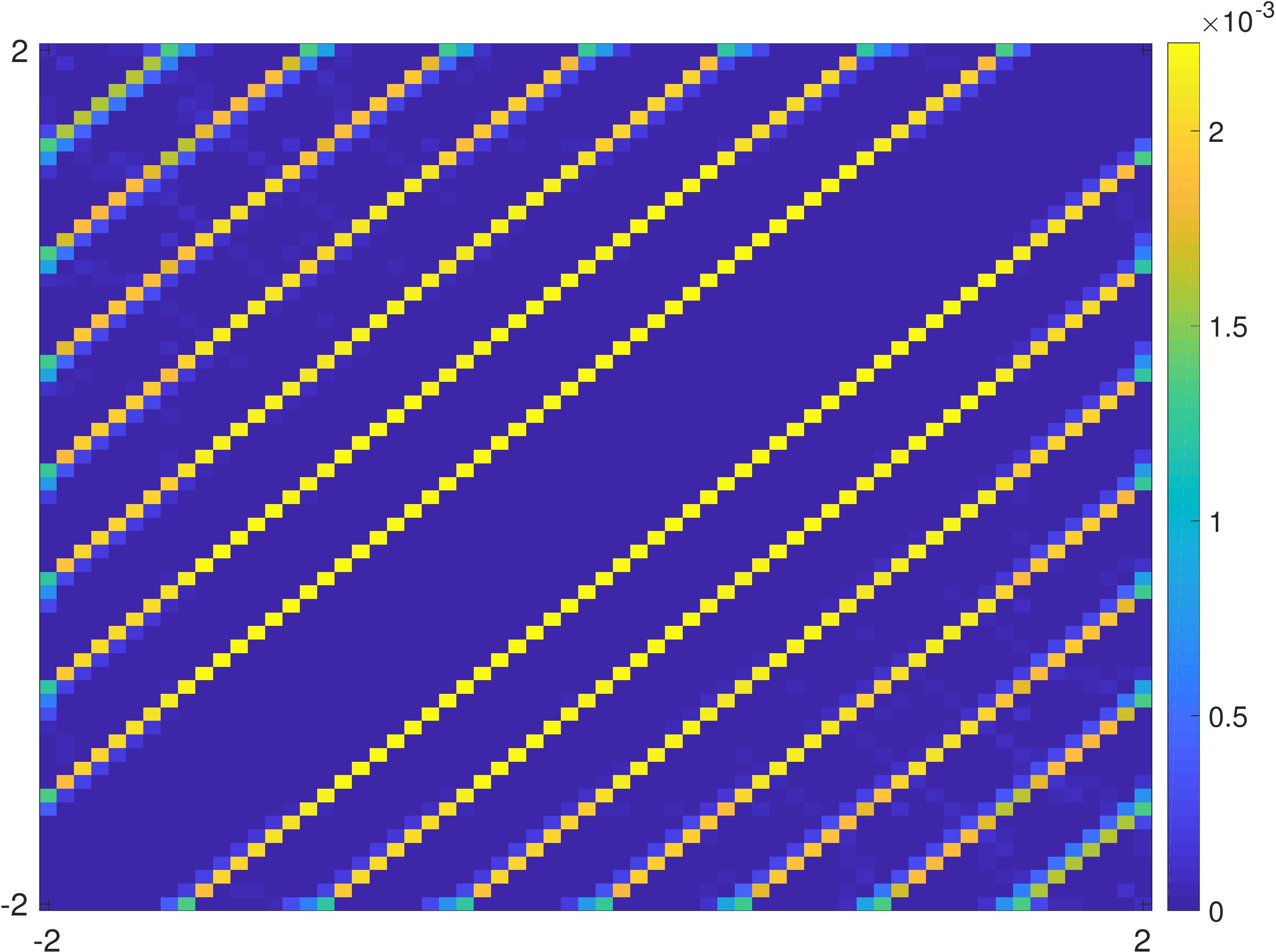}}
	\qquad
	\subfloat[SDP-Coulomb2.]{\includegraphics[width=0.47\columnwidth]{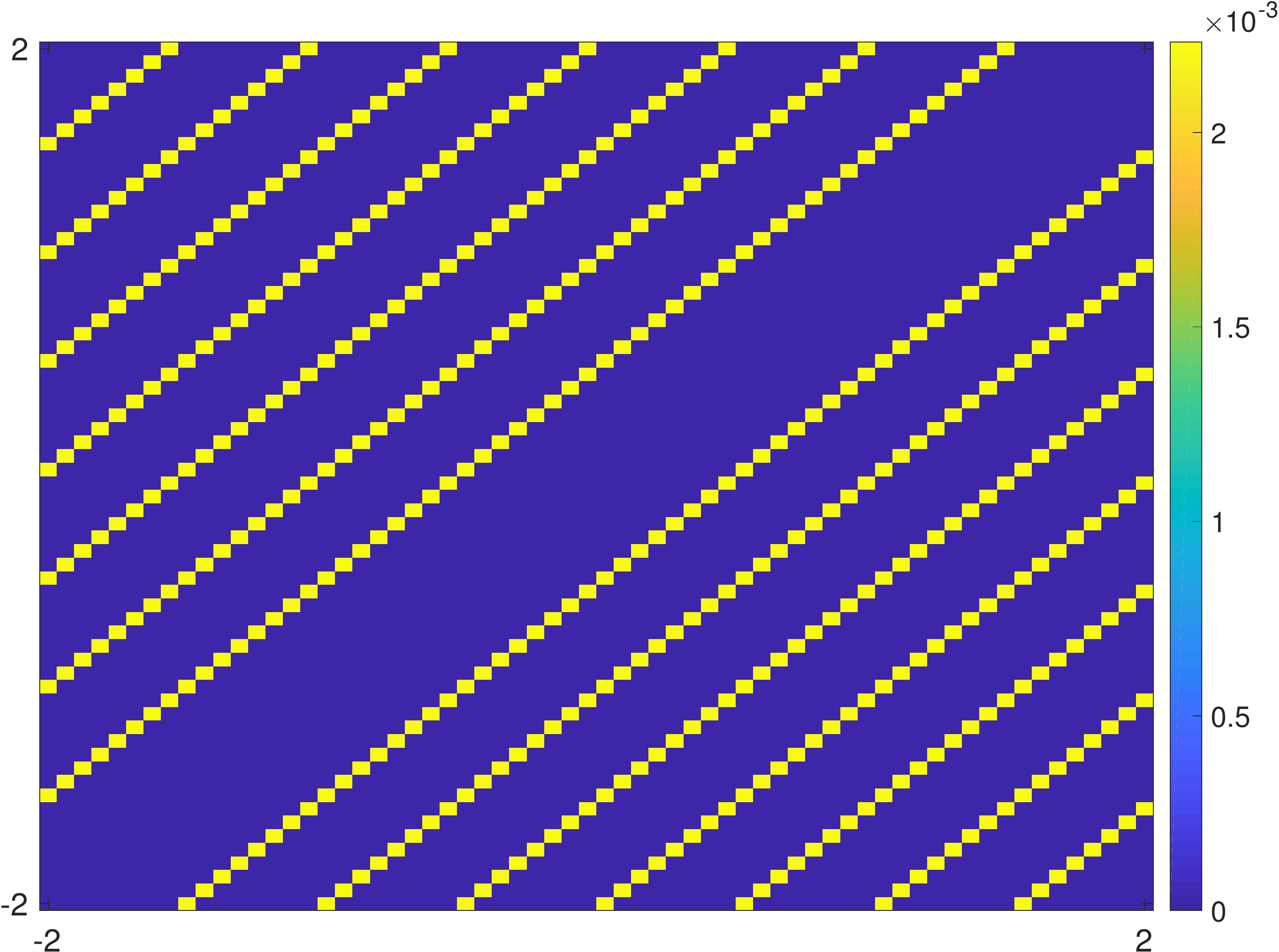}}
	\qquad
	\caption{2-marginal from solving the multimarginal transport problem with the marginal
      $\rho_1(x)$ where $N=8$, $\vert X \vert = 64$, $d=1$. (a): Solution from
      SDP-Coulomb. ${E_\text{gap}}_1=$ 4.9e-04. (b): Solution from SDP-Coulomb2. ${E_\text{gap}}_2=$
      -1.0e-06. The negative sign for the energy gap is due to the limitation of numerical
      accuracy. }\label{figure: unif64}
\end{figure}

\begin{figure}[!ht]
	\centering
	\subfloat[SDP-Coulomb.]{\includegraphics[width=0.47\columnwidth]{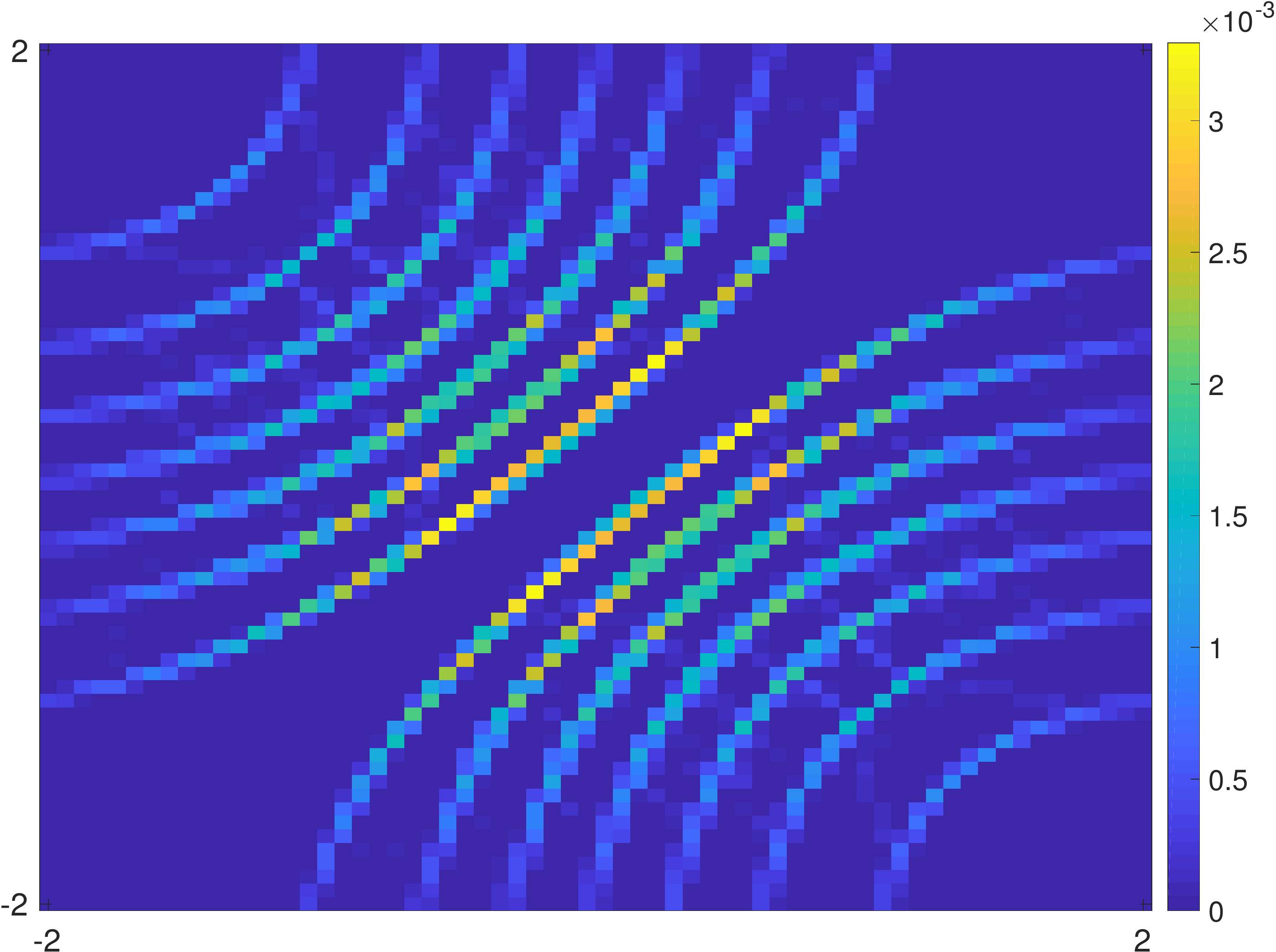}}
	\qquad
	\subfloat[SDP-Coulomb2.]{\includegraphics[width=0.47\columnwidth]{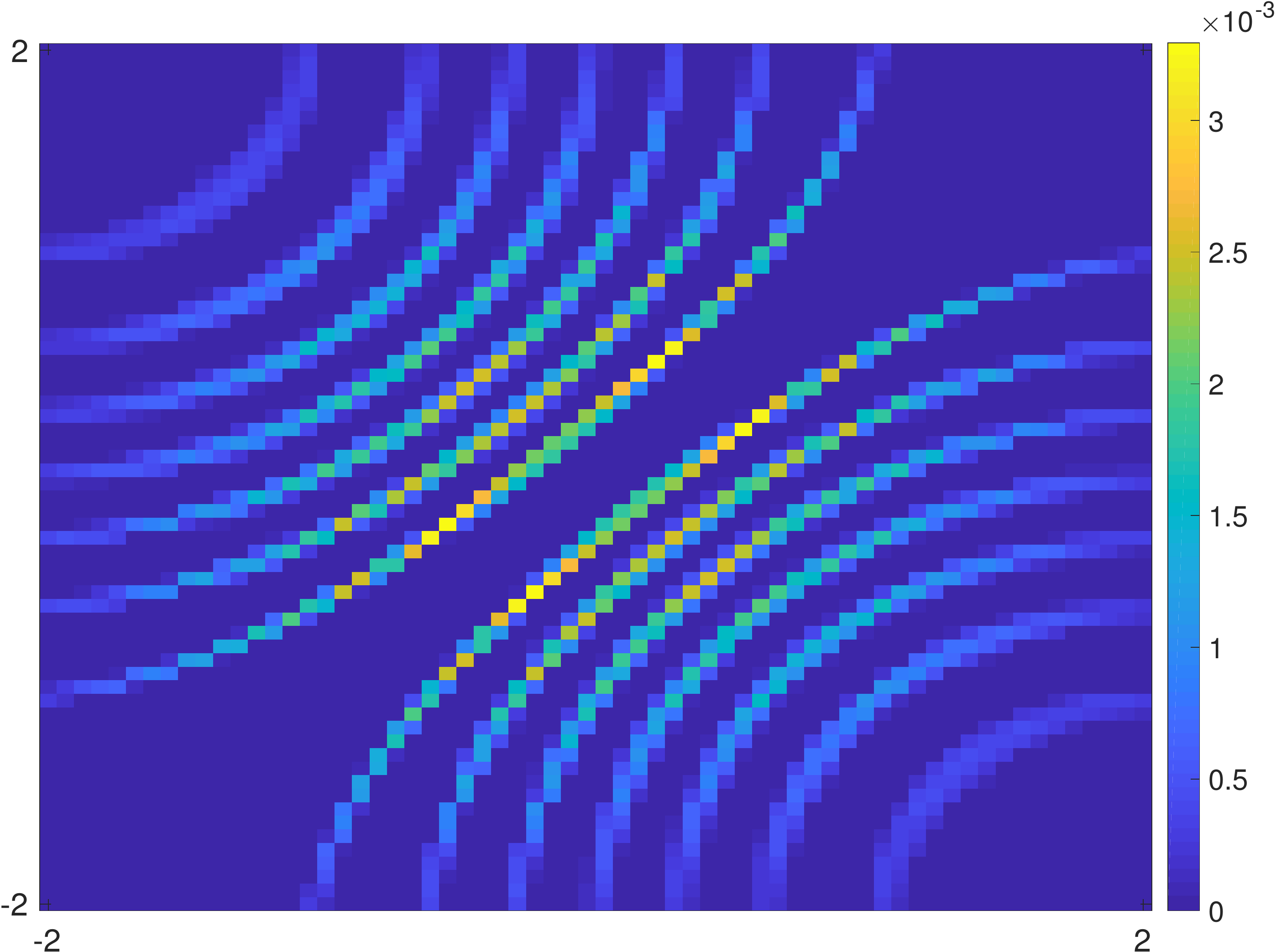}}
	\qquad
	\caption{2-marginal from solving the multimarginal transport problem with the marginal
      $\rho_2(x)$ where $N=8$, $\vert X \vert = 64$, $d=1$. (a): Solution from
      SDP-Coulomb. ${E_\text{gap}}_1=$ 1.8e-03. (b): Solution from SDP-Coulomb2. ${E_\text{gap}}_2=$
      1.5e-03.}\label{figure: gaussian64}
\end{figure}

\begin{figure}[!ht]
	\centering
	\subfloat[SDP-Coulomb.]{\includegraphics[width=0.47\columnwidth]{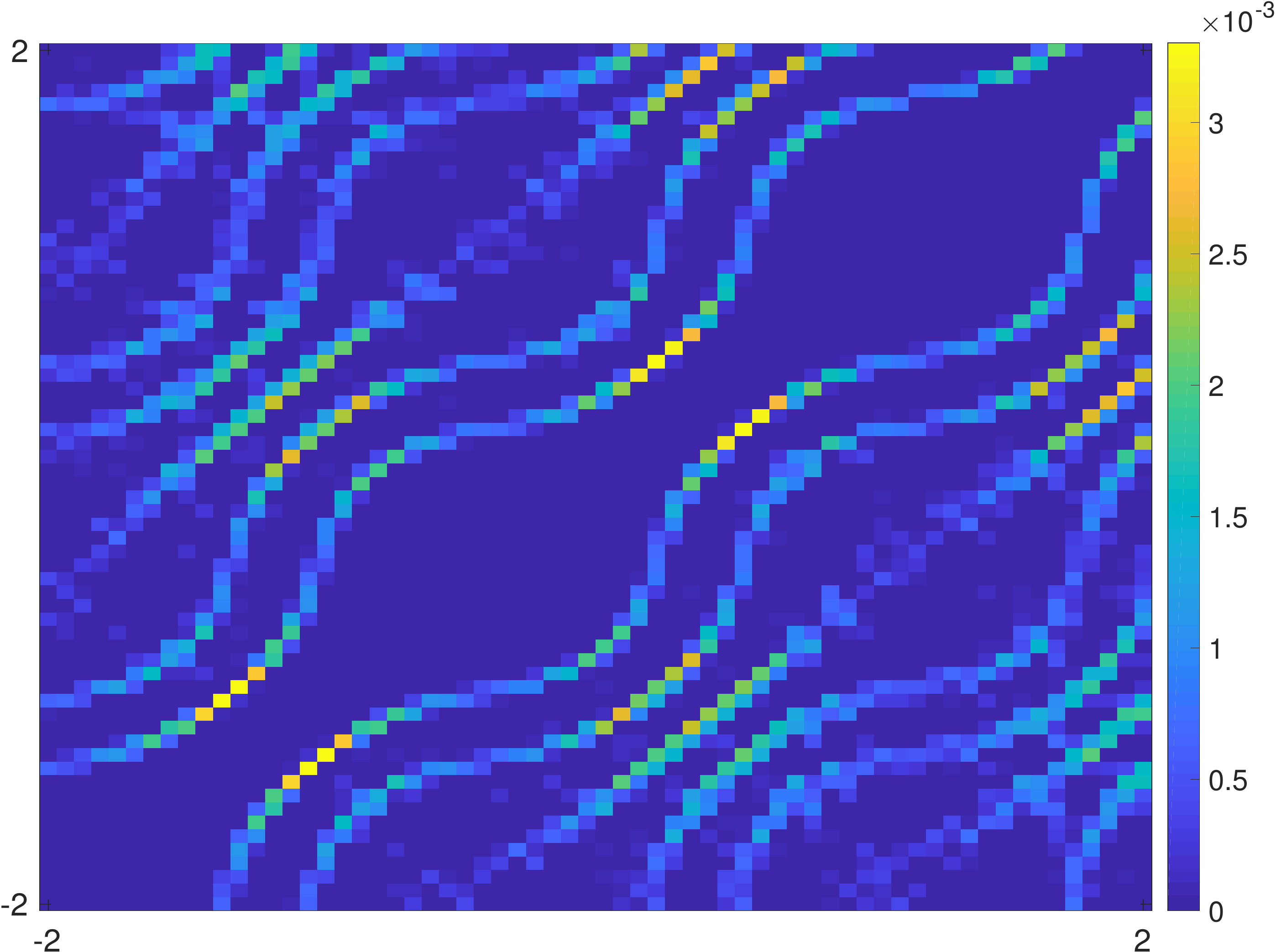}}
	\qquad
	\subfloat[SDP-Coulomb2.]{\includegraphics[width=0.47\columnwidth]{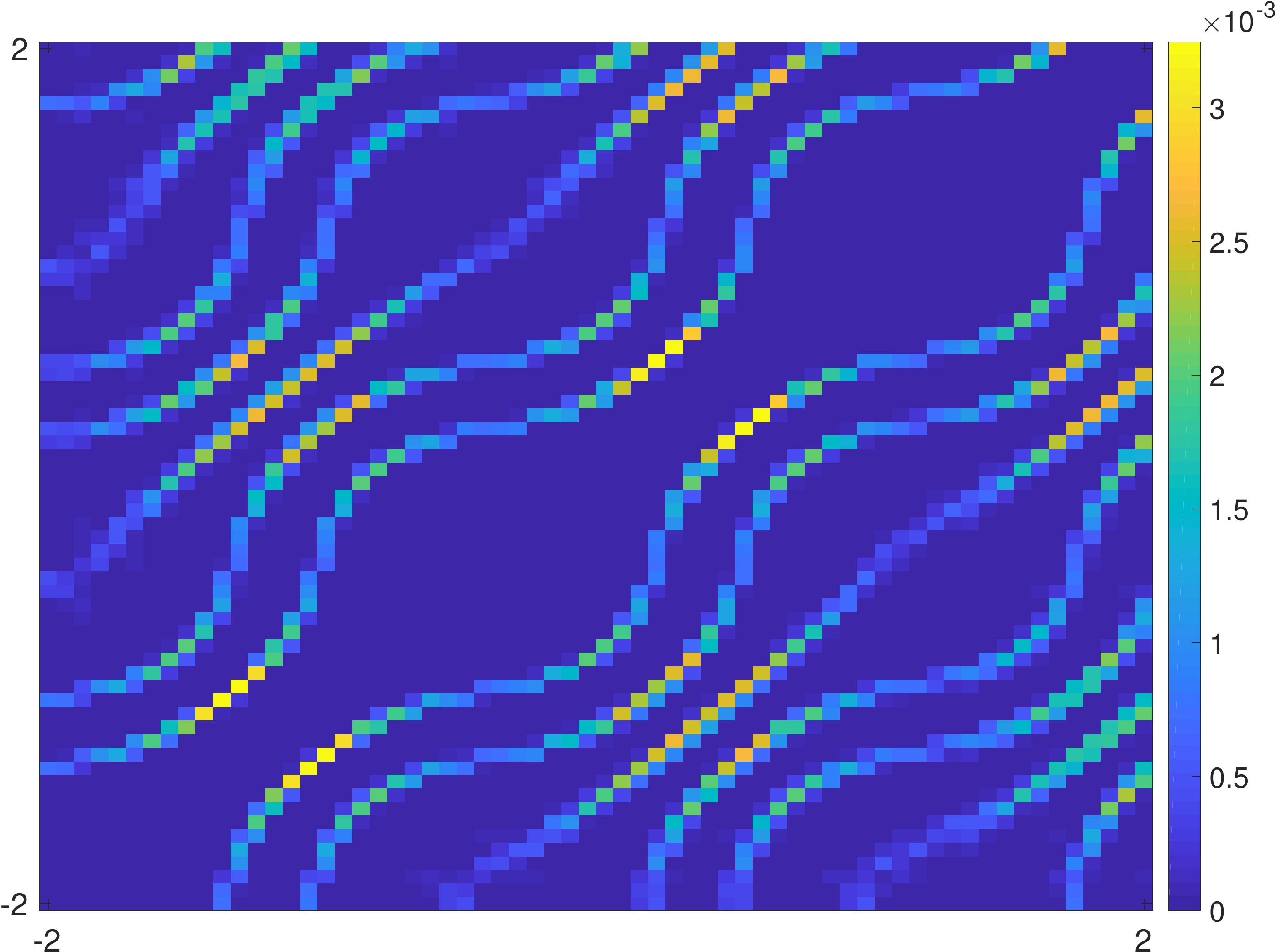}}
	\qquad
	\caption{2-marginal from solving the multimarginal transport problem with the marginal
      $\rho_3(x)$ where $N=8$, $\vert X \vert = 64$, $d=1$. (a): Solution from
      SDP-Coulomb. ${E_\text{gap}}_1=$ 4.2e-02. (b): Solution from SDP-Coulomb2. ${E_\text{gap}}_2=$
      3.9e-02.}\label{figure: sine64}
\end{figure}

\begin{figure}[!ht]
	\centering
	\subfloat[SDP-Coulomb.]{\includegraphics[width=0.47\columnwidth]{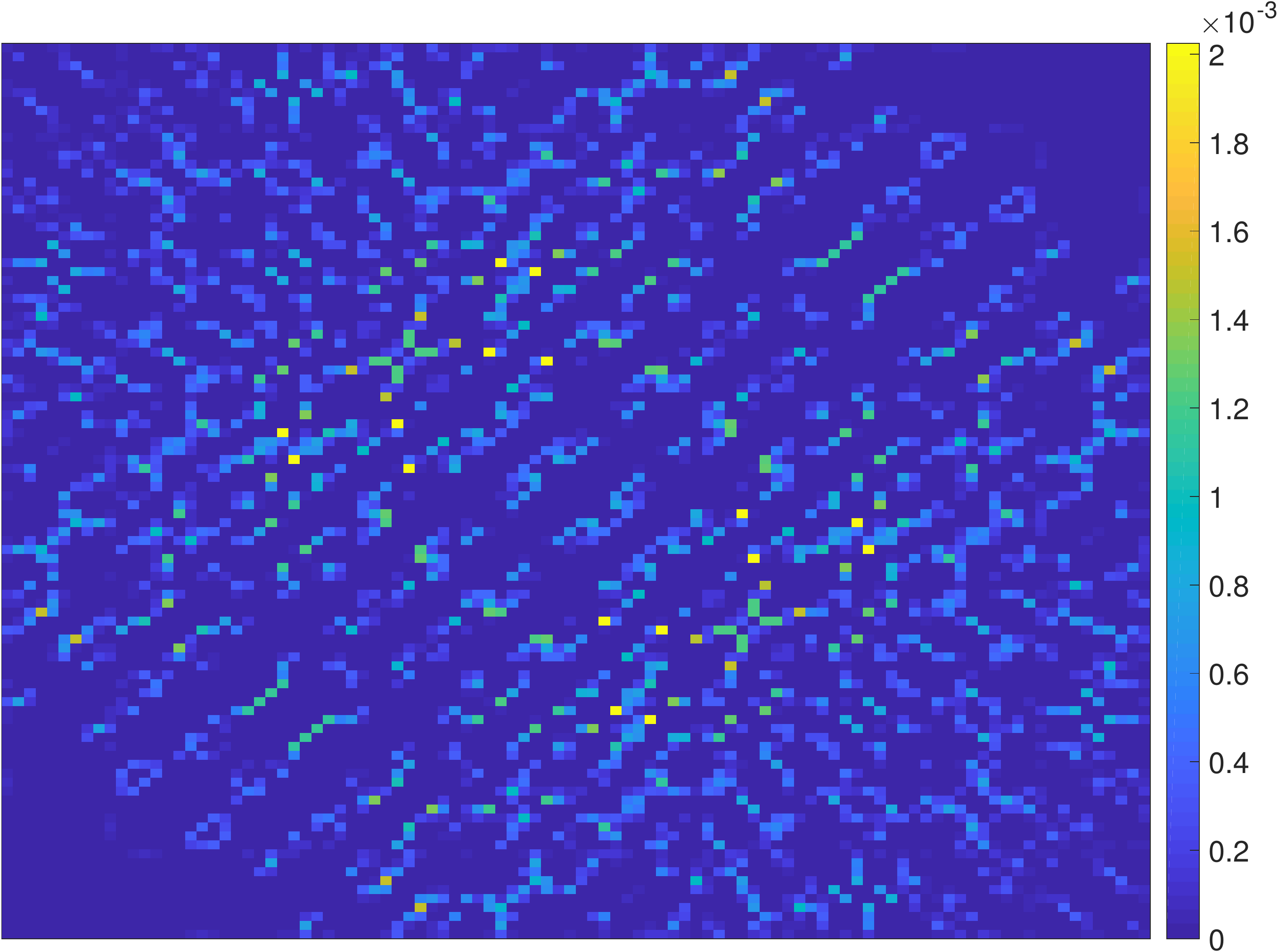}}
	\qquad
	\subfloat[SDP-Coulomb2.]{\includegraphics[width=0.47\columnwidth]{figures/gaussian2d2_100.pdf}}
	\qquad
	\caption{Solution to the multimarginal transport problem with the marginal $\rho_4(x)$ where
      $N=6$, $\vert X \vert = 10^2$, $d=2$. The 2D domain $X$ is vectorized in order to present the
      2-marginal. (a): Solution from SDP-Coulomb. ${E_\text{gap}}_1=$ 3.8e-02. (b): Solution from
      SDP-Coulomb2. ${E_\text{gap}}_2=$ 3.5e-02.}\label{figure: gaussian2d}
\end{figure}

\subsection{Approximating the Kantorovich potential}
As mentioned previously, the dual problem \eqref{vsce dual} can also be used to approximate the
Kantorovich problem \eqref{vsce}. The 1D cases admit semi-analytic solutions for the dual
potential \cite{seidl1999strong}. First, the \emph{comotion function} is defined as
\begin{equation}
  f_i(x)=\begin{cases}
  N_e^{-1}(N_e(x)+i-1), & x\leq N_e^{-1}(N+1-i),  \\
  N_e^{-1}(N_e(x)+i-1-N), & x > N_e^{-1}(N+1-i),
  \end{cases}
\end{equation} 
for $i=1,\ldots,N$, where
\begin{equation}
  N_e := N \int_{-\infty}^x \rho(x) dx.
\end{equation} 
Then the Kantorovich potential $v^*(x)$ is defined via
\begin{equation}
  \label{gt kanto}
  \nabla v^*(x) = -N\sum_{i=1}^N \frac{x- f_i(x)}{\| x-f_i(x) \|_2^3}.
\end{equation}
We compare the dual potential $w^*$ obtained from solving \eqref{vsce dual} to the ground truth
Kantorovich potential \eqref{gt kanto}. We let $\vert X \vert = 200$ and the marginals being
$\rho_1(x),\rho_2(x)$ and $\rho_3(x)$. The error is reported using the metric
\begin{equation}
  \text{Error}_v = \frac{\| v^*-w^*\|_2}{\|v^*\|_2}.
\end{equation}
In these cases, we obtain errors of the order of $10^{-3}$ to $10^{-2}$. The results are presented
in Fig. \ref{figure: kantorovich}.
\begin{figure}[!ht]
	\centering
	\subfloat[$\rho_1(x)$.]{\includegraphics[width=0.4\columnwidth]{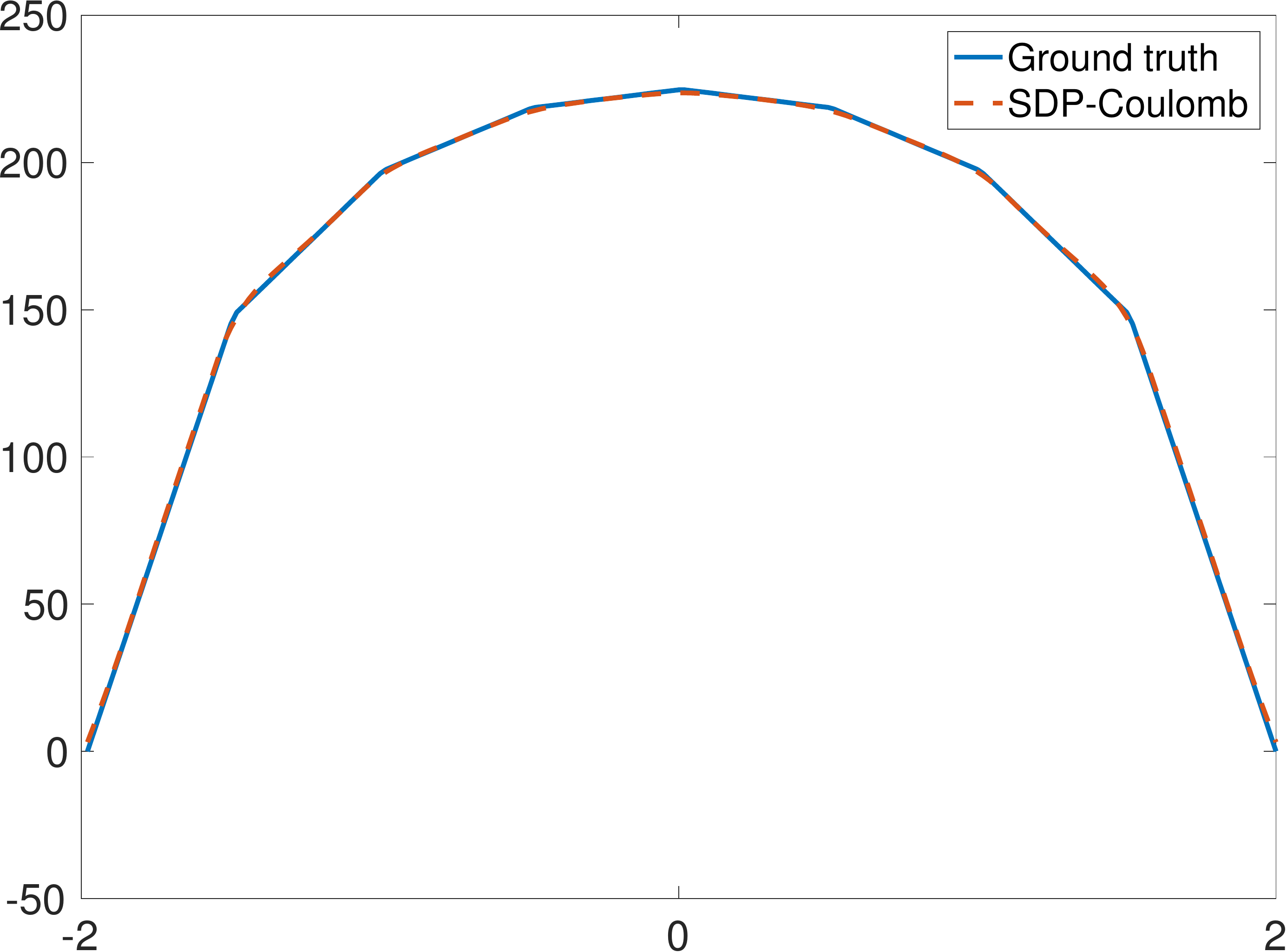}}
	\qquad
	\subfloat[$\rho_2(x)$.]{\includegraphics[width=0.4\columnwidth]{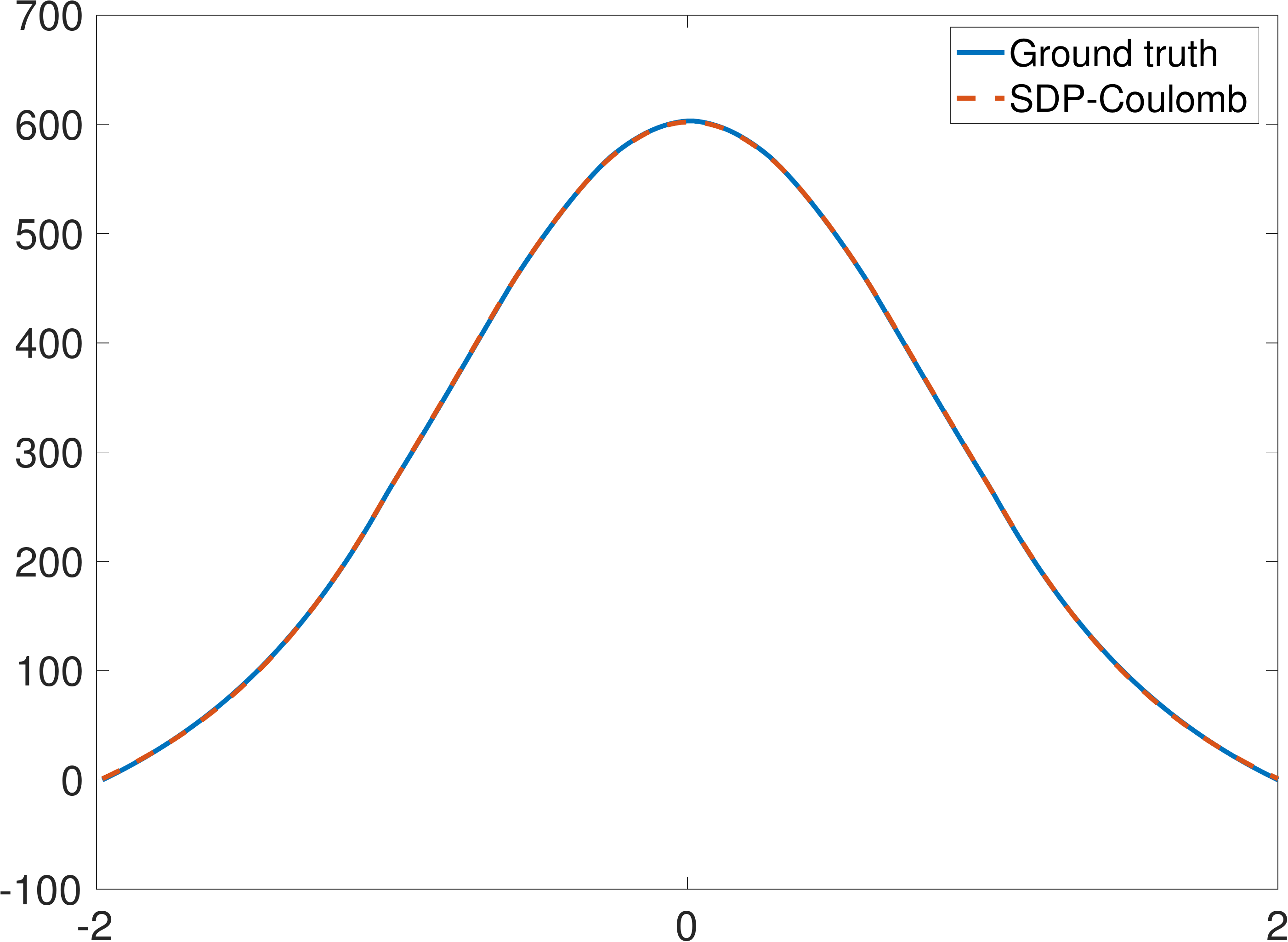}}
	\qquad
	\subfloat[$\rho_3(x)$.]{\includegraphics[width=0.4\columnwidth]{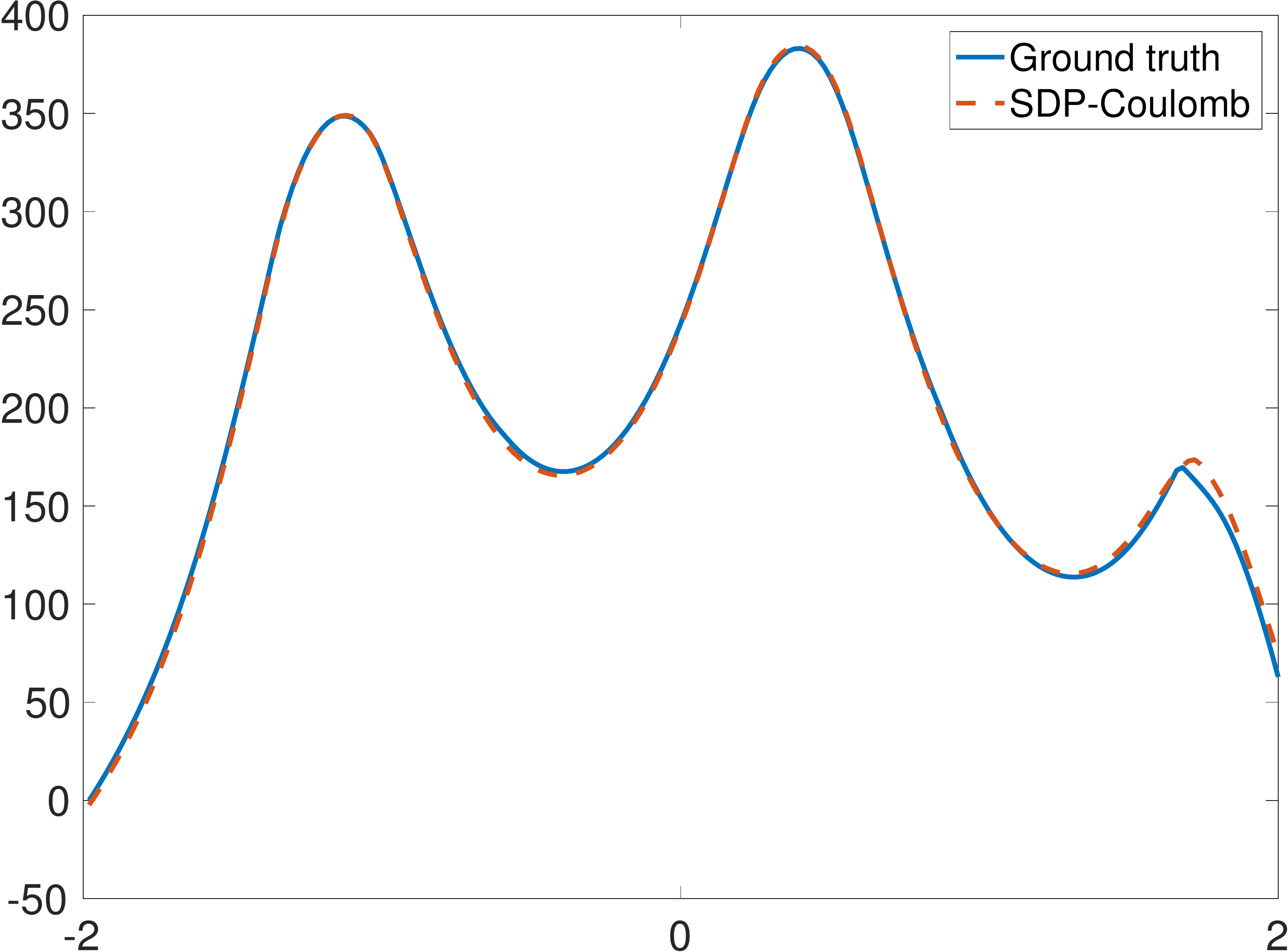}}
	\caption{Solution to the dual problem \eqref{vsce} where $N=8$, $\vert X \vert =
      200$, $d=1$. The ground truth is given by \eqref{gt kanto}, and the approximation is given by
      the solution to the dual problem of SDP-Coulomb \eqref{vsce dual}. (a): With marginal
      $\rho_1(x)$. $\text{Error}_v$=4.5e-03. (b): With marginal
      $\rho_2(x)$. $\text{Error}_v$=1.4e-03.  (c): With marginal
      $\rho_3(x)$. $\text{Error}_v$=1.2e-02. }\label{figure: kantorovich}
\end{figure}

\section{Conclusion}
We propose methods based on convex relaxation for solving the multi-marginal transport type problems
in the context of DFT. By convexly relaxing the domain of 2 and 3-marginals, the resulting convex
optimization problems have computational complexities independent of the number of electrons. For
the numerical simulations presented here, directly applying linear programming or Sinkhorn scaling
based algorithm \cite{benamou2016numerical} to Problem \eqref{symmetric MM} would have led to a
tensor with number of entries between $10^{14}$ to $10^{25}$, for the choice of $N$ and
$\vert{X}\vert$ used here.

Furthermore, a key feature of the proposed methods is that they provide both upper and lower bounds
on the energy.
From an algorithmic point of view, it is crucial to develop faster customized optimizer in order to
address large-scale applications in the future. From a theoretical point of view, it is important to
study theoretically how well SDP-Coulomb and SDP-Coulomb2 approximate Problem \eqref{symmetric MM}.

\section*{Acknowledgments}
The authors thank Prof. Lin Lin for introducing the problem. Y.K. thanks Prof. Emmanuel Cand\`es for
the partial support from a Math+X postdoctoral fellowship. The work of Y.K. and L.Y. is partially supported
by the U.S. Department of Energy, Office of Science, Office of Advanced Scientific Computing
Research, Scientific Discovery through Advanced Computing (SciDAC) program and the National Science
Foundation under award DMS-1818449.

\bibliographystyle{abbrv}
\bibliography{bibref}

\end{document}